\documentclass[11 pt]{amsart}
\usepackage{amssymb}
\usepackage[colorlinks, linkcolor=blue, citecolor=red]{hyperref}
\newtheorem{theorem}{Theorem}[section]

\newtheorem{proposition}[theorem]{Proposition}
\newtheorem{corollary}[theorem]{Corollary}
\theoremstyle{definition}

\newtheorem{example}[theorem]{Example}

\theoremstyle{remark}

\newtheorem{remark}[theorem]{Remark}
\newcommand{\Ker}{\operatorname{Ker}}
\newcommand{\im}{\operatorname{Im}}
\newcommand{\rk}{\operatorname{rk}}

\setlength\oddsidemargin{.8mm}
\setlength\evensidemargin{.8mm}
\setlength\textheight{21cm}
\setlength\textwidth{16.5cm}

\begin{document}
\title[Group actions on Milnor manifolds ]{Free actions of some compact groups on Milnor manifolds}
\author{PINKA DEY}
\author{MAHENDER SINGH}
\address{Department of Mathematical Sciences, Indian Institute of Science Education and Research (IISER) Mohali, Knowledge City, Sector 81, SAS Nagar, Manauli (PO), Punjab 140306, India.}
\email{pinkadey11@gmail.com, mahender@iisermohali.ac.in.}
\subjclass{Primary 57S25, 57S10; Secondary 55R20, 55M20}
\keywords{Borsuk-Ulam theorem; cohomology algebra; equivariant map; index of involution; Leray-Serre spectral sequence; Milnor manifold; Schwarz genus}

\begin{abstract}
In this paper, we investigate free actions of some compact groups on cohomology real and complex Milnor manifolds. More precisely, we compute the  mod 2 cohomology algebra of the orbit space of an arbitrary free $\mathbb{Z}_2$ and $\mathbb{S}^1$-action on a compact Hausdorff space with mod 2 cohomology algebra of a real or a complex Milnor manifold. As applications, we deduce some Borsuk-Ulam type results for equivariant maps between spheres and these spaces. For the complex case, we obtain a lower bound on the Schwarz genus, which further establishes the existence of coincidence points for maps to the Euclidean plane. 
\end{abstract}
\maketitle

\section{Introduction}
A basic problem in the theory of transformation groups is to determine groups that can act freely on a given topological space. Once we know that a group acts freely on a given space, the next natural problem is to determine all actions of the group up to conjugation. Determining the homeomorphism or homotopy type of the orbit space is, in general, a difficult problem. A non-trivial result of Oliver \cite{oliver} states that the orbit space of any action of a compact Lie group on a Euclidean space is contractible. For spheres, Milnor \cite{milnor} proved that for any free involution on $\mathbb{S}^n$, the orbit space has the homotopy type of $\mathbb{R}P^n$. Free actions of finite groups on spheres, particularly $\mathbb{S}^3$, have been well-studied in the past, see for example  \cite{rice, ritter, rubinstein}. But, not many results are known for compact manifolds other than spheres. In \cite{myers}, Myers investigated orbit spaces of free involutions on three-dimensional lens spaces. In \cite{tao}, Tao determined orbit spaces of free involutions on ${\mathbb{S}^1} \times \mathbb{S}^2$, and Ritter \cite{ritter2} extended these results to free actions of cyclic groups of order $2^n$. Tollefson \cite{tollefson} proved that there are precisely four conjugacy classes of involutions on ${\mathbb{S}^1} \times \mathbb{S}^2$. Fairly recently, Jahren and Kwasik \cite{jahren} classified, up to conjugation, all free involutions on $\mathbb{S}^1 \times \mathbb{S}^n$ for $n \ge 3$, by showing that there are exactly four possible homotopy types of orbit spaces.\par

Various attempts have been made towards the weaker problem of determining possible cohomology algebra of orbit spaces of free actions of finite groups on some specific classes of manifolds, for example, products of spheres, spherical space forms and their products. Dotzel et al. \cite{dotzel} determined the cohomology algebra of orbit spaces of free $\mathbb{Z}_p$ ($p$ prime) and $\mathbb{S}^1$-actions on cohomology product of two spheres. Orbit spaces of free involutions on cohomology lens spaces were investigated by Singh \cite{msinghlens}. The cohomology algebra of orbit spaces of free involutions on product of two projective spaces was computed in another paper \cite{msinghprojective}. Recently, Pergher et al. \cite{pergher} and Mattos et al. \cite{mattos} considered free $\mathbb{Z}_2$ and $\mathbb{S}^1$-actions on spaces of type $(a, b)$, which are certain products or wedge sums of spheres and projective spaces. As applications, they also established some bundle theoretic analogues of Borsuk-Ulam theorem for these spaces. \par

Viewing the product of two projective spaces as a trivial bundle, it is interesting to consider non-trivial projective space bundles over projective spaces. Milnor manifolds are fundamental examples of such spaces.  It is well-known that the unoriented cobordism algebra of smooth manifolds is generated by the cobordism classes of real projective spaces and real Milnor manifolds \cite[Lemma 1]{milnor3}. Therefore, determining various invariants of these manifolds is of interest. Free actions of elementary abelian 2-groups on products of Milnor manifolds have been investigated in \cite{msinghmilnor}, wherein some bounds on the rank of these groups are determined.\par

The aim of this paper is to investigate free $\mathbb{Z}_2$ and $\mathbb{S}^1$-actions on mod 2 cohomology real and complex Milnor manifolds. More precisely, we determine the possible mod 2 cohomology algebra of orbit spaces of free $\mathbb{Z}_2$ and $\mathbb{S}^1$-actions on these spaces. We also find necessary and sufficient conditions for the existence of free actions on these spaces. As applications, we obtain some Borsuk-Ulam kind results for these spaces. We also determine some lower bound on the genus and results on the existence of non-empty coincidence set.\par

The paper is organized as follows. In Section \ref{sec2}, we recall the definition and cohomology of Milnor manifolds. In Section \ref{sec3}, we construct free $\mathbb{Z}_n$ and $\mathbb{S}^1$-actions on these manifolds. Section \ref{sec4} consists of some preliminaries from the theory of compact transformation groups that will be used in subsequent sections. Induced action on cohomology is investigated in Section \ref{sec5}. In Section \ref{sec6}, we prove our main results as Theorems \ref{cohomo-real-z2}, \ref{cohomo-complex-z2} and \ref{cohomo-s1-real}. Finally, in Section \ref{sec7}, we deduce some Borsuk-Ulam type results for equivariant maps between spheres and these spaces. For the complex case, we obtain a lower bound on the Schwarz genus, which establishes the existence of coincidence points for maps to the Euclidean plane. \par
\bigskip

\section{Milnor manifolds}\label{sec2}
Let $r$ and $s$ be integers such that $0\leq s\leq r$. A real Milnor manifold, denoted by $\mathbb{R}H_{r,s}$, is the non-singular hypersurface of degree $(1,1)$ in the product $\mathbb{R}P^r \times \mathbb{R}P^s$ of real projective spaces. Milnor \cite{milnor3} introduced these manifolds in search for generators for the unoriented cobordism algebra. Clearly, $\mathbb{R}H_{r,s}$ is a $(s+r-1)$-dimensional closed smooth manifold, and can be described in terms of homogeneous coordinates of real projective spaces as $$\mathbb{R}H_{r,s}=\Big\{\big([x_0, \dots, x_r], [y_0,\dots,y_s]\big) \in \mathbb{R}P^r \times \mathbb{R}P^s~|~ x_0y_0 +\hspace{1mm}\cdots \hspace{1mm}+ x_sy_s=0\Big\}.$$ Alternatively, $\mathbb{R}H_{r,s}$ is given as the total space of the fiber bundle $$\mathbb{R}P^{r-1} \stackrel{i}{\hookrightarrow} \mathbb{R}H_{r,s} \stackrel{p}{\longrightarrow} \mathbb{R}P^{s}.$$ This is projectivization of the vector bundle $$\mathbb{R}^r \hookrightarrow E^{\perp} \longrightarrow \mathbb{R}P^{s},$$ where $ E^{\perp} $ is the orthogonal complement in $\mathbb{R}P^s \times \mathbb{R}^{r+1}$ of the canonical line bundle $$\mathbb{R} \hookrightarrow E \longrightarrow \mathbb{R}P^{s}.$$ \par

Similarly, a complex Milnor manifold, denoted by $\mathbb{C}H_{r,s}$, is a $2(s+r-1)$-dimensional closed
smooth manifold, given in terms of homogeneous coordinates as
$$\mathbb{C}H_{r,s}=\Big\{\big([z_0,\dots,z_r],[w_0,\dots,w_s]\big)\in \mathbb{C}P^r \times \mathbb{C}P^s~|~ z_0\overline{w}_0+\hspace{1mm}\cdots\hspace{1mm}+z_s\overline{w}_s=0\Big\}.$$ As in the real case, $\mathbb{C}H_{r,s}$ is the total space of the fiber bundle $$\mathbb{C}P^{r-1} \stackrel{i}{\hookrightarrow} \mathbb{C}H_{r,s} \stackrel{p}{\longrightarrow} \mathbb{C}P^{s}.$$ It is known due to Conner and Floyd \cite[p.63]{conner2} that $\mathbb{C}H_{r,s}$ is unoriented cobordant to $\mathbb{R}H_{r,s} \times \mathbb{R}H_{r,s}$.

These manifolds have been well-studied in the past. See, \cite{galvez, kamata, msinghmilnor} for some recent results. Their cohomology algebra is also well-known \cite{bukhshtaber, hkmukherjee}, and we need it for the proofs of our main results.

\begin{theorem}\label{lem2.1}
Let $0 \leq s \leq r$. Then the following holds:
\begin{enumerate}
\item $H^*(\mathbb{R}H_{r,s}; \mathbb{Z}_2) \cong \mathbb{Z}_2[a,b]/\langle a^{s+1}, b^r+ab^{r-1}+ \cdots+a^sb^{r-s} \rangle$,\\
where $a$ and $b$ are homogeneous elements of degree one each.\vspace{2mm}
\item $H^*(\mathbb{C}H_{r,s}; \mathbb{Z}_2) \cong \mathbb{Z}_2[g,h]/\langle g^{s+1},h^r+gh^{r-1}+\cdots+g^sh^{r-s} \rangle$,\\
where $g$ and $h$ are homogeneous elements of degree two each.
\end{enumerate}
\end{theorem}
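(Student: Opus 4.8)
The plan is to exploit the fiber bundle descriptions of the Milnor manifolds recorded above and apply the projective bundle theorem, a standard consequence of the Leray--Hirsch theorem, to read off the ring structure directly. Recall that $\mathbb{R}H_{r,s}$ is the projectivization of the rank $r$ real vector bundle $E^{\perp} \to \mathbb{R}P^s$, where $E^{\perp}$ is the orthogonal complement of the canonical line bundle $E$ in the trivial bundle $\mathbb{R}P^s \times \mathbb{R}^{r+1}$. For the projectivization $P(\xi) \to B$ of a rank $n$ real vector bundle $\xi$, the mod $2$ cohomology of the fiber $\mathbb{R}P^{n-1}$ is generated by the restriction of $c = w_1(\gamma)$, where $\gamma$ is the tautological line bundle over $P(\xi)$; hence Leray--Hirsch applies and upgrades to the ring presentation
\begin{equation*}
H^*(P(\xi); \mathbb{Z}_2) \cong H^*(B; \mathbb{Z}_2)[c]\big/\big\langle c^n + w_1(\xi)c^{n-1} + \cdots + w_n(\xi)\big\rangle .
\end{equation*}

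First I would compute the Stiefel--Whitney classes of $E^{\perp}$. Writing $a = w_1(E) \in H^1(\mathbb{R}P^s; \mathbb{Z}_2)$ for the generator, so that $H^*(\mathbb{R}P^s; \mathbb{Z}_2) \cong \mathbb{Z}_2[a]/\langle a^{s+1}\rangle$, the Whitney sum formula applied to the trivialization $E \oplus E^{\perp} \cong \mathbb{R}P^s \times \mathbb{R}^{r+1}$ gives $w(E^{\perp}) = (1+a)^{-1} = 1 + a + a^2 + \cdots$. Since $a^{s+1} = 0$, this yields $w_i(E^{\perp}) = a^i$ for $0 \le i \le s$ and $w_i(E^{\perp}) = 0$ for $i > s$. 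Substituting these into the projective bundle formula with $b = w_1(\gamma)$ collapses the characteristic polynomial to $b^r + a b^{r-1} + \cdots + a^s b^{r-s}$, which together with the relation $a^{s+1} = 0$ inherited from the base gives precisely the presentation in part (1).

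For the complex case I would run the identical argument with the complex projective bundle theorem, the only change being that all the relevant classes now live in even degrees. Here $\mathbb{C}H_{r,s} = P(E^{\perp})$ for the complex bundle $E^{\perp} \to \mathbb{C}P^s$, and with $g$ the mod $2$ reduction of $c_1$ of the canonical bundle (so that $H^*(\mathbb{C}P^s; \mathbb{Z}_2) \cong \mathbb{Z}_2[g]/\langle g^{s+1}\rangle$ with $\deg g = 2$), the identity $c(E) c(E^{\perp}) = 1$ gives $c_i(E^{\perp}) \equiv g^i \pmod 2$ for $i \le s$ and $c_i(E^{\perp}) \equiv 0$ otherwise. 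Setting $h$ equal to the mod $2$ reduction of $c_1$ of the tautological line bundle over $P(E^{\perp})$, the complex projective bundle formula produces the relation $h^r + g h^{r-1} + \cdots + g^s h^{r-s}$ together with $g^{s+1} = 0$, yielding part (2).

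The main obstacle, and the step deserving the most care, is verifying the hypothesis of Leray--Hirsch, namely that the powers $1, c, \ldots, c^{n-1}$ restrict to a basis of the cohomology of each fiber; for real projective bundles this amounts to the statement that $c = w_1(\gamma)$ restricts to the generator of $H^*(\mathbb{R}P^{n-1}; \mathbb{Z}_2)$, which holds because $\gamma$ restricts to the canonical line bundle on each projective-space fiber, and analogously in the complex case. One should also confirm that the generators $a, b$ (respectively $g, h$) named in the theorem may be taken to be these characteristic classes, so that the resulting isomorphism is one of graded algebras rather than merely of modules; this identification is what promotes the Leray--Hirsch module splitting to the full ring presentation displayed above.
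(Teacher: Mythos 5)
Your proof is correct, but note that there is nothing in the paper to compare it against: Theorem \ref{lem2.1} is stated without proof and simply quoted from the literature (\cite{bukhshtaber, hkmukherjee}). Your argument via the projective bundle theorem is the standard derivation, and it is exactly what the bundle description recorded in Section \ref{sec2} sets up: from $E \oplus E^{\perp} \cong \mathbb{R}P^s \times \mathbb{R}^{r+1}$ you get $w(E^{\perp}) = (1+a)^{-1} = 1 + a + \cdots + a^s$ in $\mathbb{Z}_2[a]/\langle a^{s+1}\rangle$, and the Grothendieck relation $b^r + w_1(E^{\perp})b^{r-1} + \cdots + w_r(E^{\perp}) = 0$ collapses to $b^r + ab^{r-1} + \cdots + a^s b^{r-s} = 0$; since Leray--Hirsch makes $H^*(\mathbb{R}H_{r,s};\mathbb{Z}_2)$ a free module over $H^*(\mathbb{R}P^s;\mathbb{Z}_2)$ on $1, b, \ldots, b^{r-1}$, this single relation generates the ideal and the module splitting upgrades to the stated ring presentation, as you observe. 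The one step I would spell out more fully is the complex case: the paper only asserts the fibration $\mathbb{C}P^{r-1} \hookrightarrow \mathbb{C}H_{r,s} \to \mathbb{C}P^s$ without exhibiting a vector bundle, and your identification $\mathbb{C}H_{r,s} = P(E^{\perp})$ deserves a sentence of justification --- it is precisely because the defining equation $z_0\overline{w}_0 + \cdots + z_s\overline{w}_s = 0$ is written with conjugates that the fiber over $[w]$ is a complex-linear (Hermitian orthogonal complement) subspace of $\mathbb{C}^{r+1}$, so that the fibration really is the projectivization of a complex vector bundle; after that, your computation with mod $2$ reductions of Chern classes goes through verbatim.
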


Note that, $\mathbb{R}H_{r,0}= \mathbb{R}P^{r-1}$ and $\mathbb{C}H_{r,0}= \mathbb{C}P^{r-1}$. Since orbit spaces of free involutions on real and complex projective spaces are well-known, we, henceforth, assume that $1 \leq s \leq r$.\par
\bigskip

\section{Free actions on Milnor manifolds}\label{sec3}

\subsection{Circle Actions}
We give examples of free $\mathbb{S}^1$-actions on $\mathbb{R} H_{r,s}$ in the case when both $r$ and $s$ are odd, and later prove that this is indeed a necessary condition for the existence of a free $\mathbb{S}^1$-action. We first give a free $\mathbb{S}^1$-action on $\mathbb{R}P^{s}$. Note that, only odd-dimensional real projective spaces admit free $\mathbb{S}^1$-actions. Let $s=2m+1$ and write an element of \hspace{.5mm}$\mathbb{R}P^{2m+1}$ as $ [w_0, \dots,w_m]$, where $w_i$ are complex numbers.\par Define a map $\mathbb{S}^1 \times \mathbb{R}P^{2m+1} \rightarrow \mathbb{R}P^{2m+1}$  by 
$$\big(\xi, [w_0,\dots,w_m]\big)= [\sqrt \xi w_0,\dots,\sqrt \xi w_m].$$\label{*}
It can be checked that the preceding map gives a free $\mathbb{S}^1$-action on $\mathbb{R}P^{2m+1}$.\par

Let $r=2n+1$ and write an element of $\mathbb{R}P^{r}$ as $ [z_0 , \dots,z_n]$, where $z_i$ are complex numbers. Define an action of \hspace{.5mm}$\mathbb{S}^1$ on $\mathbb{R}P^{r}$ by 
$$\big(\xi,[z_0 ,\dots,z_n]\big)= [\sqrt \xi z_0, \dots,\sqrt \xi z_n].$$
The diagonal action on $\mathbb{R }P^r \times \mathbb{R}P^s$ is free, and $\mathbb{R} H_{r,s}$ is invariant under this action giving rise to a free $\mathbb{S}^1$-action on $\mathbb{R} H_{r,s}$. Restricting the above $\mathbb{S}^1$-action gives free $\mathbb{Z}_n$ (in particular $\mathbb{Z}_2$) action on  $\mathbb{R} H_{r,s}$.\par
\medskip
If $X$ and $Y$ are two spaces, then $X \cong Y$ means that $X$ and $Y$ have isomorphic mod 2 cohomology algebras, not necessarily induced by a map between $X$ and $Y$. For the complex case, we have the following result.

\begin{proposition}
There is no free $\mathbb{S}^1$-action on a compact Hausdorff space $X \simeq_2  \mathbb{C}H_{r,s} $.
\end{proposition}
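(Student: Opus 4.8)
The plan is to derive a contradiction from the parity of the cohomology by playing the two fibrations of the Borel construction against each other. Suppose, for contradiction, that $\mathbb{S}^1$ acts freely on a compact Hausdorff space $X$ with $H^*(X;\mathbb{Z}_2)\cong H^*(\mathbb{C}H_{r,s};\mathbb{Z}_2)$, and form the Borel space $X_{\mathbb{S}^1}=E\mathbb{S}^1\times_{\mathbb{S}^1}X$. Since the action is free, the projection $X_{\mathbb{S}^1}\to X/\mathbb{S}^1$ has contractible fibres $E\mathbb{S}^1$, so $H^*_{\mathbb{S}^1}(X;\mathbb{Z}_2)\cong H^*(X/\mathbb{S}^1;\mathbb{Z}_2)$. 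As $X$ is compact, hence finitistic, and the action is free, I would invoke the transformation-group facts assembled in Section \ref{sec4} to conclude that $X/\mathbb{S}^1$ is finitistic of finite cohomological dimension; thus $H^n(X/\mathbb{S}^1;\mathbb{Z}_2)=0$ for all large $n$, i.e. $H^*_{\mathbb{S}^1}(X;\mathbb{Z}_2)$ is finite-dimensional.

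Next I would feed the other fibration $X\hookrightarrow X_{\mathbb{S}^1}\to B\mathbb{S}^1=\mathbb{C}P^\infty$ into the Leray--Serre spectral sequence. Because $B\mathbb{S}^1$ is simply connected the local system is trivial, so $E_2^{p,q}\cong H^p(\mathbb{C}P^\infty;\mathbb{Z}_2)\otimes H^q(X;\mathbb{Z}_2)$; note that this identification needs no analysis of the induced action on $H^*(X)$. By Theorem \ref{lem2.1}(2) the generators $g,h$ of $H^*(X;\mathbb{Z}_2)$ both have degree two, so $H^q(X;\mathbb{Z}_2)=0$ for odd $q$; likewise $H^p(\mathbb{C}P^\infty;\mathbb{Z}_2)=0$ for odd $p$. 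Hence $E_2^{p,q}=0$ unless both $p$ and $q$ are even, and in particular every nonzero term sits in even total degree $p+q$.

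The key observation is that each differential $d_r\colon E_r^{p,q}\to E_r^{p+r,q-r+1}$ raises total degree by one, hence would carry a class of even total degree to one of odd total degree. Since the odd--total--degree part of $E_r$ is a subquotient of that of $E_2$, which vanishes, every $d_r$ is zero and the spectral sequence collapses at $E_2$. Therefore the associated graded of $H^*_{\mathbb{S}^1}(X;\mathbb{Z}_2)$ is $\mathbb{Z}_2[t]\otimes H^*(X;\mathbb{Z}_2)$ with $\deg t=2$; in particular $E_\infty^{n,0}\cong H^n(\mathbb{C}P^\infty;\mathbb{Z}_2)\ne 0$ for every even $n$ (using $H^0(X;\mathbb{Z}_2)=\mathbb{Z}_2$), so $H^*_{\mathbb{S}^1}(X;\mathbb{Z}_2)$ is nonzero in arbitrarily high degrees. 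This contradicts the finite-dimensionality obtained above, and the proposition follows.

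I would expect the only genuinely delicate point to be the input that $X/\mathbb{S}^1$ has finite cohomological dimension, equivalently that $H^*_{\mathbb{S}^1}(X;\mathbb{Z}_2)$ is finite-dimensional for a free action on a finitistic space; everything else is the bookkeeping of a collapse forced by the even grading. The underlying principle is the familiar one that a free $\mathbb{S}^1$-action cannot coexist with the polynomial ring $\mathbb{Z}_2[t]$ surviving to $E_\infty$, and the all-even cohomology of $\mathbb{C}H_{r,s}$ makes that survival unavoidable.
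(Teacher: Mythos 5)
Your proof is correct, but it takes a genuinely different route from the paper's. The paper argues via Euler characteristics: since $\mathbb{C}H_{r,s}$ is a $\mathbb{C}P^{r-1}$-bundle over $\mathbb{C}P^s$, one has $\chi(X)=r(s+1)\neq 0$; a free $\mathbb{S}^1$-action would restrict to a free $\mathbb{Z}_p$-action for every prime $p$, and Floyd's formula $\chi(X)=p\,\chi(X/\mathbb{Z}_p)$ then forces every prime to divide $r(s+1)$, an immediate contradiction. Your argument instead runs the Leray--Serre spectral sequence of the Borel fibration $X\hookrightarrow X_{\mathbb{S}^1}\to B\mathbb{S}^1$: since $H^*(X;\mathbb{Z}_2)$ and $H^*(B\mathbb{S}^1;\mathbb{Z}_2)$ are both concentrated in even degrees (Theorem \ref{lem2.1}(2)), the $E_2$-page lives entirely in even total degree, every differential (raising total degree by one) must vanish, the sequence collapses at $E_2$, and $H^*(X_{\mathbb{S}^1};\mathbb{Z}_2)\cong H^*(X/\mathbb{S}^1;\mathbb{Z}_2)$ is nonzero in arbitrarily high degrees. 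The ``delicate point'' you flag --- that $H^j(X/\mathbb{S}^1;\mathbb{Z}_2)$ vanishes for large $j$ --- is indeed available in the paper: it is exactly the unlabeled $\mathbb{S}^1$-analogue of Proposition \ref{z2-higher-coh-vanish} in Section \ref{sec4}, proved there by the Gysin sequence, combined with the homotopy equivalence $X_{\mathbb{S}^1}\simeq X/\mathbb{S}^1$ for free actions. As for what each approach buys: the paper's proof is shorter and needs only $\chi\neq 0$ (in fact it also rules out free $\mathbb{Z}_p$-actions for every prime $p$ not dividing $r(s+1)$), whereas yours stays inside the spectral-sequence framework used everywhere else in the paper and applies verbatim to any compact Hausdorff space whose finite-dimensional mod $2$ cohomology is concentrated in even degrees. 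Note, though, that even-degree concentration already forces $\chi=\dim_{\mathbb{Z}_2}H^*(X;\mathbb{Z}_2)>0$, so the Euler-characteristic criterion subsumes your hypothesis; the trade-off is elementary brevity versus an argument that makes the obstruction --- the survival of $\mathbb{Z}_2[u]$ to $E_\infty$ --- structurally transparent.
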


\begin{proof}
 Recall that, we have a fiber bundle 
$$\mathbb{C}P^{r-1} \stackrel{}{\hookrightarrow} \mathbb{C}H_{r,s} \stackrel{}{\longrightarrow} \mathbb{C}P^{s}$$
with  $\chi(\mathbb{C}H_{r,s} )=r(s+1)$. Suppose there is a free $\mathbb{S}^1$-action on $X$. Restriction of this action gives free $\mathbb{Z}_p$-actions for each prime $p$. By Floyd's Euler characteristic formula \cite[Chapter III, Theorem 7.10]{bredon}, we have \[r(s+1) = \chi(X)= p\, \chi(X/\mathbb{Z}_p)\] for each prime $p$, which is a contradiction. Hence, there is no free  $\mathbb{S}^1$-action on a space $X \simeq_2  \mathbb{C}H_{r,s} $.
\end{proof}
\bigskip

\subsection{Involutions}\label{sec3.2}
If $s =r$, then interchanging the coordinates i.e.,$$\big([z_0,\dots,z_s], [w_0,\dots,w_s]\big) \longmapsto \big([w_0,\dots,w_s], [z_0,\dots,z_s]\big)$$ gives a free involution on Milnor manifolds. But, if $1<s< r$ and $r\not\equiv 2 \pmod 4$, then we show that $\mathbb{R}H_{r,s}$ (respectively $\mathbb{C}H_{r,s}$) admits a free involution if and only if both $r$ and $s$ are odd. We have seen examples of free involutions on real Milnor manifolds before.\par
For the complex case, it is known that $\mathbb{C}P^n$ admits a free action by a finite group if and only if $n$ is odd, and in that case the only possible group is $\mathbb{Z}_2$ \cite{hatcher}.\par

 If $s$ is odd, then the map
$$[z_0,z_1,\dots, z_{s-1},z_s] \longmapsto [-\overline{z}_1,\overline{z}_0,\dots, -\overline{z}_s,\overline{z}_{s-1}],$$ 
defines a free involution on $\mathbb{C}P^s$. Similarly, for $r$ odd, the map $$[z_0,z_1,\dots,z_{r-1},z_r] \longmapsto [-\overline{z}_1,\overline{z}_0,\dots, -\overline{z}_{r},\overline{z}_{r-1}],$$ is a free involution on $\mathbb{C}P^r$. Hence, the diagonal action on $\mathbb{C}P^r \times \mathbb{C}P^s$ is free and its restriction gives a free involution on $\mathbb{C}H_{r,s}$.
\bigskip

\section{Preliminaries}\label{sec4}
For the convenience of the reader, we recall some facts that we use without mentioning explicitly. For further details, we refer the reader to \cite{allday, bredon, mccleary}. Throughout, we use cohomology with $\mathbb{Z}_2$ coefficients, and suppress it from the notation.\par 

Let $G$ be a group and $X$ a $G$-space. Let 
$$G \hookrightarrow E_G \longrightarrow B_G$$ 
be the universal principal $G$-bundle and $$X\stackrel{i}{\hookrightarrow} X_G \stackrel{\pi}{\longrightarrow} B_G$$ the associated Borel fibration  \cite[Chapter IV]{borel}. Our main computational tool is the Leray-Serre spectral sequence associated to the Borel fibration \cite[Theorem 5.2]{mccleary}. The $E_2$-term of this spectral sequence is given by $$ E_2^{k,l}= H^k\big(B_G; \mathcal{H}^l(X)\big),$$ where $\mathcal{H}^l(X)$ is a locally constant sheaf with stalk $H^l(X)$ and group $G$. Further, the spectral sequence converges to $H^*(X_G)$ as an algebra. If $\pi_1(B_G)$ acts trivially on $H^*(X)$, then the system of local coefficient is simple and we get $$E_2^{k,l} \cong H^k(B_G) \otimes H^l(X).$$ Further, if the system of local coefficient is simple, then the edge homomorphisms
$$ H^k(B_G)=E_2^{k,0} \longrightarrow E_3^{k,0}\longrightarrow \cdots   \longrightarrow E_k^{k,0} \longrightarrow E_{k+1}^{k,0}=E_{\infty}^{k,0}\subset H^k(X_G)$$
and  $$H^l(X_G) \longrightarrow E_{\infty}^{0,l}= E_{l+1}^{0,l} \subset E_{l}^{0,l} \subset \cdots \subset E_2^{0,l}= H^l(X)$$
are the homomorphisms $$\pi^*: H^k(B_G) \to H^k(X_G) ~ ~ ~ \textrm{and} ~ ~ ~ i^*: H^l(X_G)  \to H^l(X),$$
respectively \cite[Theorem 5.9]{mccleary}.\par

On passing to quotients, the $G$-equivariant projection $X \times E_G \to X$ yields the fiber bundle $$E_G \hookrightarrow X_G \stackrel{h}{\longrightarrow} X/G$$ with contractible fiber $E_G$. By \cite[p. 20]{allday}, $h$ is a homotopy equivalence, and consequently
$$h^*: H^*(X/G) \stackrel{\cong}{\longrightarrow} H^*(X_G).$$

Next, we recall some results regarding free $\mathbb{Z}_2$ and  $\mathbb{S}^1$-actions on compact Hausdorff spaces. For free actions, vanishing of $H^*(X)$ implies vanishing of $H^*(X/G)$ in higher range \cite[p. 374, Theorem 1.5]{bredon}.
 
\begin{proposition}\label{z2-higher-coh-vanish}
Let $G = \mathbb{Z}_2$ act freely on a compact Hausdorff space $X$. Suppose that $H^j(X) = 0$ for all $j > n$, then $H^j(X/G) =0$ for all $j > n$. 
\end{proposition}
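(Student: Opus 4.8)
The plan is to work with the orbit space $Y := X/G$ directly and to combine two independent pieces of information about the cohomology class that classifies the double cover. First I would record the standard reductions. Since the action is free and $X$ is compact Hausdorff, the orbit map $p\colon X \to Y$ is a principal $\mathbb{Z}_2$-bundle, i.e.\ a double cover, and $Y$ is again compact Hausdorff, hence finitistic. From the contractible-fibre fibration $E_G \hookrightarrow X_G \stackrel{h}{\to} X/G$ recalled in the preliminaries we have $H^*(Y) \cong H^*(X_G)$, so it suffices to prove $H^j(X_G) = 0$ for $j > n$. Let $c \in H^1(Y)$ be the characteristic class of the cover; under the isomorphism above it corresponds to $\pi^*(t)$, where $\pi\colon X_G \to B_{\mathbb{Z}_2}$ is the Borel map and $t$ generates $H^*(B_{\mathbb{Z}_2}) = \mathbb{Z}_2[t]$, so that multiplication by $t$ on $H^*(X_G)$ becomes $\smile c$ on $H^*(Y)$.

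The first ingredient is injectivity of $\smile c$ in high degrees, which I would extract from the Thom--Gysin sequence of the real line bundle associated to the double cover,
\[ \cdots \to H^{j-1}(Y) \xrightarrow{\,\smile c\,} H^{j}(Y) \xrightarrow{\,p^*\,} H^{j}(X) \xrightarrow{\,\partial\,} H^{j}(Y) \xrightarrow{\,\smile c\,} H^{j+1}(Y) \to \cdots . \]
Feeding in the hypothesis $H^j(X) = 0$ for $j > n$, exactness at the first copy of $H^j(Y)$ shows that $\smile c\colon H^{j}(Y)\to H^{j+1}(Y)$ is surjective for $j \ge n$, while exactness at the second copy shows it is injective for $j \ge n+1$; hence $\smile c\colon H^{j}(Y)\to H^{j+1}(Y)$ is an \emph{isomorphism} for every $j \ge n+1$. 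The same conclusion can be read off the Leray--Serre spectral sequence of $X \hookrightarrow X_G \to B_{\mathbb{Z}_2}$, whose $E_2$-page vanishes in the rows $l > n$ and on which multiplication by $t$ is eventually an isomorphism along each of the finitely many surviving rows.

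The second ingredient supplies the finiteness that converts these isomorphisms into vanishing, and this is where freeness genuinely enters. Since the action is free, $X^{\mathbb{Z}_2} = \emptyset$, so Borel's localization theorem for finitistic spaces (see \cite{bredon} and \cite{allday}) shows that, after inverting $t$, the restriction $H^*(X_G) \to H^*\big((X^{\mathbb{Z}_2})_G\big)$ is an isomorphism; as the target is $H^*(\emptyset)=0$, we get $S^{-1}H^*(X_G) = 0$ with $S = \{1,t,t^2,\dots\}$. Equivalently, $H^*(Y)$ is $c$-torsion: every class is annihilated by some power of $c$. Now I would argue by contradiction. If $0 \neq x \in H^{n+1}(Y)$, then injectivity of $\smile c$ in degrees $\ge n+1$ forces $c^m x \neq 0$ for all $m$, contradicting $c$-torsion; hence $H^{n+1}(Y) = 0$, and the chain of isomorphisms $\smile c$ propagates this to give $H^j(Y) = 0$ for all $j > n$.

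I expect the Gysin/spectral-sequence bookkeeping to be routine; the real content, and the main obstacle, is the finiteness input. The Gysin sequence by itself only yields that $\smile c$ is eventually an isomorphism, which is entirely consistent with nonzero cohomology in all high degrees — exactly what happens for non-free actions, where $c$ restricts to a non-nilpotent class. It is precisely freeness, entering through the empty fixed-point set and the localization theorem, that rules this out. Some care is also needed to justify that $p$ is a principal $\mathbb{Z}_2$-bundle and that the associated line bundle (equivalently, the local coefficient system) behaves well in the compact Hausdorff, \v{C}ech-cohomology setting in which the Thom isomorphism and localization theorem are applied.
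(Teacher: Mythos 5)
Your argument is correct, but note that the paper itself offers no proof of this proposition: it is stated in the preliminaries as a recalled fact, with the citation to Bredon \cite[p.~374, Theorem 1.5]{bredon} standing in for a proof. What you have written is, in substance, the classical argument behind that cited theorem. Your bookkeeping in the Smith--Gysin sequence is right ($\smile c$ is surjective $H^j(Y)\to H^{j+1}(Y)$ for $j\ge n$ and injective for $j\ge n+1$, hence an isomorphism for $j\ge n+1$), your identification of where freeness genuinely enters --- through the empty fixed-point set and localization, not merely through the existence of the double cover --- is exactly the point a careless argument would miss, and the supporting facts you invoke (compact Hausdorff spaces are finitistic, the Gysin sequence and the localization theorem are valid in the \v{C}ech-theoretic setting) are all covered by the paper's references \cite{allday, bredon}. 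The trade-off between your route and the paper's is simply self-containedness versus economy: the paper delegates the entire content to a reference, while you make the mechanism explicit. One simplification you may wish to make: the full localization theorem can be replaced by an elementary nilpotency argument. Since the action is free and $X$ is compact Hausdorff, $p\colon X\to Y$ is a covering, so $Y$ has a finite open cover $U_1,\dots,U_N$ by evenly covered sets; then $c|_{U_i}=0$, so $c$ lifts to a class in $H^1(Y,U_i)$ for each $i$, and the product of these lifts lies in $H^N\bigl(Y, U_1\cup\dots\cup U_N\bigr)=H^N(Y,Y)=0$, whence $c^N=0$. Nilpotency of $c$ combined with your injectivity statement in degrees $\ge n+1$ gives $H^j(Y)=0$ for all $j>n$ at once, with no appeal to localization machinery.
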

For $G=\mathbb{S}^1$, one can derive an analogue of the preceding result by using the Gysin-sequence for the principle bundle $X \to X/G$.
\begin{proposition}
Let $G = \mathbb{S}^1$ act freely on a compact Hausdorff space $X$. Suppose that $H^j(X) = 0$ for all $j > n$, then $H^j(X/G) =0$ for all $j \ge n$. 
\end{proposition}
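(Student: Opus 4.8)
The plan is to exploit the principal $\mathbb{S}^1$-bundle structure of the orbit map and read the conclusion off the associated Gysin sequence, using the finiteness behind Proposition \ref{z2-higher-coh-vanish} only to seed a vanishing in high degrees. Since $G=\mathbb{S}^1$ is a compact Lie group acting freely on the compact, hence completely regular, Hausdorff space $X$, the orbit map $\rho\colon X\to X/G$ admits local sections and is therefore a principal $\mathbb{S}^1$-bundle. I would then work with the $\mathbb{Z}_2$-coefficient Gysin sequence
\[
\cdots \to H^{j}(X)\xrightarrow{\ \psi\ }H^{j-1}(X/G)\xrightarrow{\ \cup\,w\ }H^{j+1}(X/G)\xrightarrow{\ \rho^{*}\ }H^{j+1}(X)\to\cdots,
\]
where $w\in H^{2}(X/G)$ is the mod $2$ Euler class of the bundle. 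The whole argument rests on this exact sequence.

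The first step is to extract periodicity. For every index $i\ge n$ both $H^{i+1}(X)$ and $H^{i+2}(X)$ vanish by hypothesis, and the relevant three-term segment of the sequence (at $j=i+1$) then forces
\[
\cup\,w\colon H^{i}(X/G)\xrightarrow{\ \cong\ }H^{i+2}(X/G)\qquad(i\ge n).
\]
Thus, above degree $n$, cup product with $w$ identifies the cohomology of $X/G$ two degrees apart, so $H^{i}(X/G)$ depends only on the parity of $i$ once $i\ge n$.

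The second, and genuinely essential, step is to produce a single vanishing in some high degree to seed these isomorphisms; periodicity alone is useless, as $\mathbb{C}P^{\infty}$ (where $H^{i}\cong H^{i+2}$ for all $i$ yet nothing vanishes) shows. Here I would invoke finiteness of the cohomological dimension of the orbit space: because $X$ has $\mathbb{Z}_2$-cohomological dimension at most $n$ and $G$ is a compact Lie group acting freely, $X/G$ again has finite $\mathbb{Z}_2$-cohomological dimension, so $H^{j}(X/G)=0$ for all $j$ beyond some $N$. This is the exact $\mathbb{S}^1$-counterpart of the input behind Proposition \ref{z2-higher-coh-vanish}; alternatively one passes first to the free $\mathbb{Z}_2\subset\mathbb{S}^1$ quotient $X/\mathbb{Z}_2$, which by Proposition \ref{z2-higher-coh-vanish} has finite cohomological dimension, whence so does the base $X/G$ of the residual principal circle bundle $X/\mathbb{Z}_2\to X/G$.

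Finally I would run the periodicity downward: fixing any $i\ge n$ and iterating $\cup\,w$ gives $H^{i}(X/G)\cong H^{i+2k}(X/G)$ for all $k\ge 0$, and choosing $k$ with $i+2k>N$ makes the right-hand group zero. Hence $H^{j}(X/G)=0$ for all $j\ge n$. When the finiteness can be arranged to give $H^{j}(X/G)=0$ already for $j>n$, a single application of $\cup\,w\colon H^{n}(X/G)\xrightarrow{\cong}H^{n+2}(X/G)=0$ suffices, which is precisely where the Gysin sequence upgrades the range from $j>n$ to $j\ge n$. I expect the main obstacle to be not the exact-sequence bookkeeping but securing this high-degree vanishing, i.e.\ the finite cohomological dimension of $X/G$; once that is in hand, the Gysin periodicity closes the argument immediately.
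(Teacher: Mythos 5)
Your overall strategy coincides with the paper's (the paper's entire proof is ``use the Gysin sequence of the principal bundle $X \to X/G$''), and your bookkeeping is correct: Gleason's theorem makes $X\to X/G$ a principal $\mathbb{S}^1$-bundle, and the three-term segments of the Gysin sequence do give isomorphisms $\cup\,w\colon H^{i}(X/G)\to H^{i+2}(X/G)$ for all $i\ge n$. The genuine gap is in the step you yourself single out as essential: the high-degree vanishing that seeds the argument. You claim the hypothesis gives $X$ (hence $X/\mathbb{Z}_2$, hence $X/G$) finite $\mathbb{Z}_2$-cohomological dimension. That inference is false: $H^{j}(X;\mathbb{Z}_2)=0$ for $j>n$ is a statement about global \v{C}ech cohomology only and bounds no dimension of $X$. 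The Hilbert cube $Q=[0,1]^{\mathbb{N}}$ has vanishing cohomology in all positive degrees yet infinite covering and cohomological dimension; more to the point, $\mathbb{R}H_{r,s}\times Q$ with a free $\mathbb{S}^1$-action on the first factor is a compact Hausdorff, infinite-dimensional space to which this proposition must apply, so no dimension-theoretic route is available. Your fallback is circular: Proposition \ref{z2-higher-coh-vanish} yields only $H^{j}(X/\mathbb{Z}_2)=0$ for $j>n$, not finite dimension, and extracting high-degree vanishing for the base of the residual circle bundle $X/\mathbb{Z}_2\to X/G$ from vanishing for its total space is exactly the problem you set out to solve.

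The repair stays entirely inside your framework but uses compactness where you reached for dimension theory. Since $X/G$ is compact, the bundle $X\to X/G$ admits a finite trivializing open cover $U_1,\dots,U_N$; the class $w$ restricts to zero on each $U_i$, so the standard relative cup-product argument (the one behind cup-length bounds for Lusternik--Schnirelmann category) gives $w^{N}=0$: each $w$ lifts to $H^{2}(X/G,U_i)$, and the product of the lifts lies in $H^{2N}\bigl(X/G,\,U_1\cup\cdots\cup U_N\bigr)=H^{2N}(X/G,X/G)=0$. (Equivalently, a classifying map $X/G\to B_{\mathbb{S}^1}=\mathbb{C}P^{\infty}$ has compact image, hence factors through a finite skeleton.) Now fix $i\ge n$: your periodicity shows $\cup\,w^{N}\colon H^{i}(X/G)\to H^{i+2N}(X/G)$ is an isomorphism, while nilpotency shows it is the zero map, whence $H^{i}(X/G)=0$. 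This closes the argument with no seed vanishing and no appeal to finite dimensionality, which is precisely why the statement can be made for arbitrary compact Hausdorff spaces.
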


We use the wellknown facts that $H^*(B_{\mathbb{Z}_2}; \mathbb{Z}_2)= \mathbb{Z}_2[t]$ and  $H^*(B_{\mathbb{S}^1}; \mathbb{Z}_2)= \mathbb{Z}_2[u]$, where  $\deg(t)=1$ and $\deg(u)=2$, respectively. 
\bigskip

\section{Induced action on cohomology}\label{sec5}
When a group acts on a topological space, in general, it is difficult to determine the induced action on cohomology. In our context, we have the following

\begin{proposition}\label{indu-real-z2}
Let $G= \mathbb{Z}_2$ act freely on a compact Hausdorff space $X \simeq_2  \mathbb{R}H_{r,s} $, where $1 < s < r$ and $r\not\equiv 2 \pmod 4$. Then the induced action on $H^*(X)$ is trivial.
\end{proposition}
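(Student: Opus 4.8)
The plan is to use that the generator $\tau$ of $\mathbb{Z}_2$ induces a graded $\mathbb{Z}_2$-algebra automorphism $\phi:=\tau^*$ of
\[
H^*(X)\cong \mathbb{Z}_2[a,b]\big/\big\langle a^{s+1},\,F(a,b)\big\rangle,\qquad F(x,y):=\sum_{i=0}^{s}x^i y^{r-i},
\]
by Theorem \ref{lem2.1}. Since this algebra is generated in degree one, $\phi$ is determined by its restriction to $H^1(X)=\langle a,b\rangle\cong\mathbb{Z}_2^2$, that is, by an element of $GL_2(\mathbb{Z}_2)$. Proving the proposition amounts to excluding the five non-identity elements, and my strategy is to first force $\phi(a)=a$ using nilpotency orders, and then force $\phi(b)=b$ using the relation $F$.

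The key algebraic input is the nilpotency of the three nonzero classes of $H^1(X)$. From the polynomial identity $(a+b)F(a,b)=a^{s+1}b^{r-s}+b^{r+1}$, reduction modulo the defining relations gives $b^{r+1}=0$, while $b^{s+1}\neq 0$ because $s<r$ (the monomial $b^{s+1}$ is still a basis element, or a nonzero reduction when $r=s+1$). As $\phi(a)^{s+1}=\phi(a^{s+1})=0$, the class $\phi(a)$ must have vanishing $(s+1)$-st power, which already rules out $\phi(a)=b$. To obtain $\phi(a)=a$ I would show $(a+b)^{s+1}\neq 0$: expanding and reducing, the monomial $b^{s+1}$ survives whenever $r\geq s+2$, and the only way $(a+b)^{s+1}$ can vanish is the degenerate case $r=s+1=2^m-1$. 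This case is exactly where the algebra alone fails, since there $a\mapsto a+b,\ b\mapsto b$ is a genuine nontrivial ring automorphism (as one checks directly on $H^*(\mathbb{R}H_{3,2})$), so freeness must enter. Here I would invoke Floyd's Euler characteristic formula \cite[Chapter III, Theorem 7.10]{bredon}: a free $\mathbb{Z}_2$-action forces $\chi(X)=(s+1)r$ to be even, whereas $r=s+1=2^m-1$ makes $\chi(X)=(2^m-1)^2$ odd, a contradiction. Hence for every $X$ admitting a free action one has $(a+b)^{s+1}\neq 0$, and therefore $\phi(a)=a$.

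It then remains to exclude $\phi(b)=a+b$, i.e.\ to show that $b\mapsto a+b$ does not preserve $F$. Collecting the coefficient of $a^k b^{r-k}$ in $F(a,a+b)=\sum_{i=0}^{s}a^i(a+b)^{r-i}$ and using the hockey-stick identity yields
\[
F(a,a+b)\equiv\sum_{k=0}^{s}\binom{r+1}{k}a^k b^{r-k}\pmod{a^{s+1}},\qquad F(a,b)\equiv\sum_{k=0}^{s}a^k b^{r-k}.
\]
Since $F(a,a+b)$ must lie in $\langle a^{s+1},F(a,b)\rangle$, and its $k=0$ coefficient is $\binom{r+1}{0}=1$, this forces $\binom{r+1}{k}\equiv 1\pmod 2$ for all $0\leq k\leq s$. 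By Lucas' theorem this fails exactly under the stated hypotheses: if $r$ is odd then $\binom{r+1}{1}=r+1\equiv 0$, and if $r\equiv 0\pmod 4$ then $\binom{r+1}{2}\equiv 0$, with $1,2\leq s$ as $s>1$. Thus $\phi(b)=a+b$ is impossible, so $\phi(b)=b$ and $\phi$ is the identity.

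The main obstacle is this last step together with the precise role of the hypothesis $r\not\equiv 2\pmod 4$: the congruence is exactly what makes the shear $b\mapsto a+b$ incompatible with the relation $F$, and the example $\mathbb{R}H_{3,2}$ shows that freeness cannot be dispensed with, since Floyd's formula is what discards the degenerate automorphism $a\mapsto a+b$. I expect the binomial bookkeeping, and keeping track of the boundary case $r=s+1$ where reduction by $F$ genuinely intervenes, to be where most of the care is required.
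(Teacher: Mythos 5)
Your proof is correct, but it takes a genuinely different route from the paper's, and in one respect it is more complete. The paper, after reducing to $g^*$ on $H^1(X)$, rules out $g^*(b)=a$ by the same nilpotency argument you use ($a^{s+1}=0$ while $b^{s+1}\neq 0$), and then disposes of the shear $g^*(a)=a$, $g^*(b)=a+b$ by two separate ad hoc computations: for $r$ odd it evaluates $g^*$ on the top class $a^{s-1}b^{r}=a^{s}b^{r-1}$ and finds $a^{s-1}(a+b)^{r}=0$, a contradiction; for $r\equiv 0\pmod 4$ it uses $b^{r+1}=0$ and checks that $(a+b)^{r+1}\neq 0$ because the coefficient of $a^{2}b^{r-1}$ survives. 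Your hockey-stick computation replaces both cases by the single criterion that the shear preserves the ideal iff $\binom{r+1}{k}\equiv 1\pmod 2$ for all $0\leq k\leq s$, which fails at $k=1$ when $r$ is odd and at $k=2$ when $r\equiv 0\pmod 4$; this is cleaner and makes transparent exactly where the hypotheses $s>1$ and $r\not\equiv 2\pmod 4$ enter. More significantly, you treat the other shear $\phi(a)=a+b$, $\phi(b)=b$, which the paper's proof skips: it assumes $g^*(b)\neq b$ and never returns to the remaining case, which is \emph{not} symmetric, since $a$ and $b$ have different nilpotency orders. Your observation that for $r=s+1=2^{m}-1$ this shear is an honest ring automorphism (as one checks on $H^*(\mathbb{R}H_{3,2})$), so that no purely algebraic argument can exclude it, identifies a genuine lacuna in the published proof, and your appeal to Floyd's Euler characteristic formula (freeness forces $\chi(X)$ even) is exactly the right way to close it.

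One small correction: the equality $\chi(X)=(s+1)r$ you assert is false for the real Milnor manifold. The number $(s+1)r$ is the total dimension $\dim_{\mathbb{Z}_2}H^{*}(X)$, whereas the Euler characteristic is the alternating sum of Betti numbers, which equals $1$ when $s$ is even and $r$ is odd (in particular in your degenerate case $r=s+1=2^{m}-1$). This does not damage the argument, because $\chi(X)\equiv\dim_{\mathbb{Z}_2}H^{*}(X)\pmod 2$ always, so $\chi(X)$ is still odd there while Floyd's formula would force it to be even; but you should phrase the step as a parity statement rather than as that equality.
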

\begin{proof}
Let $G=\langle g \rangle$ and $a, b \in H^1(X)$ be generators of the cohomology algebra $H^*(X)$. By the naturality of cup product, we have  $$g^*(a^ib^j)=(g^*(a))^i(g^*(b))^j$$ for all $i,~ j \geq 0$. Therefore, it is enough to consider $$g^*: H^1(X) \to H^1(X).$$\par 

Suppose that $g^*$ is non-trivial. Then it cannot preserve both $a$ and $b$. Assuming that $g^*(b) \neq b$, we have $g^*(b) = a$ or $a+b$. If $g^*(b) = a$, then $g^*(b^{s+1}) = a^{s+1}=0$, which implies $b^{s+1}=0$. Hence, $b^{r}=0$, contradicting the fact that top dimensional cohomology must be non-zero with $\mathbb{Z}_2$ coefficients. So, we must have $g^*(b) = a+b$ and $g^*(a) = a$. Suppose that $r$ is odd. Then 
$$g^*(a^{s-1}b^r) = a^{s-1}(a+b)^r=ra^sb^{r-1}+a^{s-1}b^r=a^sb^{r-1}+a^{s-1}b^r=0.$$
This gives $a^{s-1}b^r=0$, which is a contradiction. Hence, for $r$ odd, the induced action on $H^*(X)$ must be trivial.\par
Suppose that $r\equiv 0 \pmod 4$. Notice that, $b^{r+1}=0$ implies $g^*(b^{r+1}) = (a+b)^{r+1}=0$. But, from the binomial expansion $$(a+b)^{r+1}= a^{r+1}+\cdots +\binom{r+1}2 a^2b^{r-1}+(r+1) ab^r,$$ we see that the last term is non-zero and the second last term is zero modulo 2. This gives $(a+b)^{r+1}\neq 0$, a contradiction. Hence, the induced action must be trivial in this case as well.
\end{proof}
\vspace{.1mm}
\begin{remark}
If $s=1$ and $r >1$ is an odd integer, then orders of $b$ and $a+b$ are $r+1$ and $r$, respectively. Hence, in this case also $g^*$ is identity.  For $s=1$ or $r\equiv 2 \pmod4$, the induced action on $H^*(X)$ may be non-trivial. 
\end{remark}

Similarly, for the complex case, we have the following

\begin{proposition}\label{indu-complex-z2}
Let $G= \mathbb{Z}_2$ act freely on a compact Hausdorff space $X \simeq_2 \mathbb{C}H_{r,s} $, where $1 < s < r$ and $r \not\equiv2 \pmod 4$. Then the induced action on $H^*(X)$ is trivial.
\end{proposition}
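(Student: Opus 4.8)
The plan is to mirror the argument of Proposition \ref{indu-real-z2}, working now with the degree-two generators supplied by Theorem \ref{lem2.1}(2). Write $G = \langle \tau \rangle$ and let $g, h \in H^2(X)$ denote the generators of $H^*(X) \cong \mathbb{Z}_2[g,h]/\langle g^{s+1}, h^r + gh^{r-1} + \cdots + g^s h^{r-s}\rangle$. Since $\tau^*$ is a ring automorphism, naturality of the cup product gives $\tau^*(g^i h^j) = (\tau^*g)^i(\tau^*h)^j$, so it suffices to determine $\tau^*$ on the two-dimensional $\mathbb{Z}_2$-space $H^2(X) = \langle g, h\rangle$; moreover $\tau^*$ is injective and satisfies $(\tau^*)^2 = \mathrm{id}$.

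Assume $\tau^*$ is non-trivial. Then it cannot fix both $g$ and $h$, and after relabelling we may assume $\tau^*(h) \neq h$, so $\tau^*(h) \in \{g, g+h\}$. First I would rule out $\tau^*(h) = g$: it would force $\tau^*(h^{s+1}) = g^{s+1} = 0$, whence $h^{s+1} = 0$ by injectivity and so $h^r = 0$ (as $r \geq s+1$), contradicting $h^r = gh^{r-1} + \cdots + g^s h^{r-s} \neq 0$. Hence $\tau^*(h) = g+h$, and imposing $(\tau^*)^2(h) = h$ forces $\tau^*(g) = g$.

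The hypothesis $r \not\equiv 2 \pmod 4$ leaves two cases, $r$ odd and $r \equiv 0 \pmod 4$. When $r$ is odd I would apply $\tau^*$ to the top class, using that the defining relation gives $g^{s-1}h^r = g^s h^{r-1}$. A direct computation, in which $g^{s+1} = 0$ leaves only the $k = 0, 1$ terms of the binomial expansion, yields
\[
\tau^*(g^{s-1}h^r) = g^{s-1}(g+h)^r = g^{s-1}h^r + r\, g^s h^{r-1} = g^s h^{r-1} + g^s h^{r-1} = 0,
\]
contradicting the injectivity of $\tau^*$ on a non-zero top class. When $r \equiv 0 \pmod 4$ I would instead exploit the relation $h^{r+1} = 0$, obtained by multiplying the defining relation by $h$ and cancelling mod $2$. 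Then $\tau^*(h^{r+1}) = (g+h)^{r+1}$ must vanish; but in its expansion the coefficient $\binom{r+1}{1} = r+1$ of $gh^r$ is odd while $\binom{r+1}{2}$ is even, so the expansion is non-zero, again a contradiction.

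The step I expect to be most delicate is the case $r \equiv 0 \pmod 4$: showing that $(g+h)^{r+1} \neq 0$ requires reducing the binomial expansion against both relations and verifying that no cancellation kills the surviving term, so I would track the parities of the relevant binomial coefficients and the reduced monomial basis carefully, exactly as in the real case. The restrictions $1 < s < r$ are used throughout, namely $s > 1$ to form $g^{s-1}$ and $s < r$ so that the defining relation expresses $h^r$ in terms of lower powers of $h$.
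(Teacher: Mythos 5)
Your proposal is, in outline, exactly the paper's own argument: the paper proves only the real case (Proposition \ref{indu-real-z2}) in detail and introduces the complex statement with ``Similarly,'' and your degree-doubled computations (evaluating $g^{s-1}(g+h)^r$ against the relation for $r$ odd, and tracking the parities of $\binom{r+1}{1}$ and $\binom{r+1}{2}$ in $(g+h)^{r+1}$ for $r\equiv 0\pmod 4$) coincide with the ones carried out there for $a$ and $b$.

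There is, however, one step where your stated justification is false, and you should know that the paper's proof silently makes the same move. You write ``after relabelling we may assume $\tau^*(h)\neq h$.'' No such relabelling is available: $g$ and $h$ are not interchangeable in this ring, since $g^{s+1}=0$ while $h^{s+1}\neq 0$ --- this asymmetry is precisely what your own argument exploits two lines later. Hence the involution $\tau^*(g)=g+h$, $\tau^*(h)=h$ is a genuinely separate case, which neither you nor the paper (which says ``Assuming that $g^*(b)\neq b$'' and never returns to the complementary case) addresses. When $s+1<r$ the case is easy to dispose of: $\tau^*(g^{s+1})=(g+h)^{s+1}$ has coefficient $1$ on the basis monomial $h^{s+1}$, hence is non-zero, contradicting $g^{s+1}=0$. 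But when $r=s+1$ this argument can fail outright: for $(r,s)=(3,2)$, which satisfies $1<s<r$ and $r\not\equiv 2\pmod 4$, the map $g\mapsto g+h$, $h\mapsto h$ \emph{is} a well-defined ring involution of $\mathbb{Z}_2[g,h]/\langle g^3,\ h^3+gh^2+g^2h\rangle$, as a two-line check shows, so no purely ring-theoretic computation can exclude it; one would need extra input, e.g.\ running the spectral-sequence argument of Proposition \ref{real-z2-s-is-odd} with the resulting twisted coefficients. In short, your attempt faithfully reproduces the published argument, including this unaddressed case; to make it honest, replace the ``relabelling'' claim by the explicit case distinction, settle the subcase $s+1<r$ as above, and flag $r=s+1$ as a residual gap shared with the paper.
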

\vspace{.1mm}
\begin{remark}
If $r=s$, then the free involution on $\mathbb{R}H_{s,s}$ given by
$$\big([x_0,\dots,x_s], [y_0,\dots,y_s]\big) \longmapsto \big([y_0,\dots,y_s], [x_0,\dots,x_s]\big),$$
and the similar free involution on $\mathbb{C}H_{s,s}$ given by
$$\big([z_0,\dots,z_s], [w_0,\dots,w_s]\big) \longmapsto \big([w_0,\dots,w_s], [z_0,\dots,z_s]\big)$$ 
are cohomologically non-trivial \cite[Propositions 5.1 and 5.2]{msinghmilnor}.
\end{remark}
\vspace{.5mm}
Next, we determine conditions on $r$ and $s$ for which a compact Hausdorff space $X \simeq_2 \mathbb{R}H_{r,s}$ or $\mathbb{C}H_{r,s}$ admits a free involution.

\begin{proposition}\label{real-z2-s-is-odd}
Let $G= \mathbb{Z}_2$ act freely on  $X \simeq_2 \mathbb{R}H_{r,s} $, where $1 < s < r$ and $r\not\equiv 2 \pmod 4$. Then both $r$ and $s$ are odd. 
\end{proposition}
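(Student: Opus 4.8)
The plan is to run the Leray--Serre spectral sequence of the Borel fibration $X \stackrel{i}{\hookrightarrow} X_G \stackrel{\pi}{\longrightarrow} B_{\mathbb{Z}_2}$ and to extract parity constraints on $r$ and $s$ from the fact that its abutment $H^*(X_G) \cong H^*(X/G)$ must vanish in high degrees. Since $G$ acts freely and $H^j(X) = 0$ for $j > s+r-1$, Proposition \ref{z2-higher-coh-vanish} gives $H^j(X/G) = 0$ for $j > s+r-1$; so the spectral sequence is forced to annihilate the a priori unbounded bottom row $E_2^{*,0}$, and this pressure is exactly what produces the conditions on $r,s$.

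First I would record the $E_2$-page. By Proposition \ref{indu-real-z2} the induced action on $H^*(X)$ is trivial, so the local system is simple and $E_2^{k,l} \cong \mathbb{Z}_2[t] \otimes H^l(X)$ as a bigraded algebra, with $\deg t = 1$. As $a,b \in E_2^{0,1}$ have fibre degree one, the only differential they can support is $d_2$, which lands in $E_2^{2,0} = \langle t^2 \rangle$; hence $d_2(a) = \alpha t^2$ and $d_2(b) = \beta t^2$ for some $\alpha,\beta \in \mathbb{Z}_2$, and the Leibniz rule then determines $d_2$ on all of $E_2$. If $\alpha = \beta = 0$, then $d_2 \equiv 0$ and, since $t,a,b$ admit no higher differentials for bidegree reasons, $E_\infty = E_2$; but then $t^k$ survives in $E_\infty^{k,0}$ for every $k$, contradicting the vanishing of $H^*(X/G)$ in high degrees. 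Thus $(\alpha,\beta) \neq (0,0)$.

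The heart of the argument is to feed the two defining relations into the derivation $d_2$. Regarding $a^{s+1}$ and $R := b^r + ab^{r-1} + \cdots + a^s b^{r-s}$ as the zero element of $E_2$, I would compute $d_2(a^{s+1}) = (s+1)\alpha\, a^s t^2$ and expand $d_2(R) \in E_2^{2,r-1} = t^2 \otimes H^{r-1}(X)$ in the monomial basis $\{a^i b^{r-1-i} : 0 \le i \le s\}$ of $H^{r-1}(X)$; both classes must vanish. A short case check on $(\alpha,\beta)$ then concludes. For $(\alpha,\beta)=(1,0)$ the coefficient of $b^{r-1}$ in $d_2(R)$ equals $1$, so $d_2(R) \neq 0$; for $(\alpha,\beta)=(0,1)$ the coefficients of $b^{r-1}$ and $ab^{r-2}$ are $r$ and $r-1 \bmod 2$, which cannot both vanish once $s>1$. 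Hence $(\alpha,\beta)=(1,1)$, in which case the coefficient of each $a^i b^{r-1-i}$ with $0 \le i \le s-1$ equals $r+1 \bmod 2$; together with $d_2(a^{s+1}) = (s+1)a^s t^2 = 0$ this forces both $r$ and $s$ to be odd.

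The main obstacle is the bookkeeping of this final step: one must verify that $\{a^i b^{r-1-i}\}_{0 \le i \le s}$ really is a basis of $H^{r-1}(X)$, so that coefficient-wise vanishing of $d_2(R)$ is legitimate, and that $d_2$ is the only differential acting on the generators. The standing hypotheses enter precisely here --- $r \not\equiv 2 \pmod 4$ is what licenses the appeal to Proposition \ref{indu-real-z2}, while $1 < s < r$ guarantees that enough independent monomials (in particular the $b^{r-1}$ and $ab^{r-2}$ coefficients) are available to separate the three cases.
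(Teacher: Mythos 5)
Your proposal is correct and takes essentially the same route as the paper's proof: triviality of the induced action via Proposition \ref{indu-real-z2} gives a simple coefficient system, non-vanishing of $d_2$ is forced by Proposition \ref{z2-higher-coh-vanish}, and the three possibilities for $\big(d_2(1\otimes a), d_2(1\otimes b)\big)$ are handled by applying the derivation $d_2$ to the relations $a^{s+1}=0$ and $b^r+ab^{r-1}+\cdots+a^sb^{r-s}=0$. Your coefficient-by-coefficient bookkeeping in the basis $\{a^ib^{r-1-i}\}_{0\le i\le s}$ is just a slightly more explicit version of the paper's computation.
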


\begin{proof}
Suppose $ \mathbb{Z}_2$ acts freely on $X \simeq_2 \mathbb{R}H_{r,s}$. Let $a$, $b \in H^1(X)$ be generators of the cohomology algebra
$H^*(X)$. 
By Proposition \ref{indu-real-z2}, $\pi_1(B_G)=\mathbb{Z}_2$ acts trivially on $H^*(X)$, so that the fibration $X  \hookrightarrow X_G  \longrightarrow B_G $ has a simple system of local coefficients. Hence the spectral sequence has the form $$E_2^{p,q} \cong H^p(B _G ) \otimes H^ q (X).$$
If $d_2:E_2^{0,1}\to E_2^{2,0}$ is trivial, then the spectral sequence degenerates at $E_2$-term and we get $H^i(X/G)\neq 0$ for infinitely many values of $i$. This contradicts Proposition \ref{z2-higher-coh-vanish}.  
Thus $d_2$ must be non-trivial. Hence we have following three possibilities:
\begin{enumerate}
\item[(i)] $d_2(1\otimes a)= t^2 \otimes 1$ and $d_2(1\otimes b)= 0$.
\item[(ii)] $d_2(1\otimes a) =0 $ and $d_2(1\otimes b) = t^2 \otimes 1$.
\item[(iii)] $d_2(1\otimes a) =t^2 \otimes 1$ and $d_2(1\otimes b)=  t^2 \otimes 1$.
\end{enumerate}
We first prove that the cases (i) and (ii) are not possible.\par
\medskip
Assuming that $s$ is odd, first we show that case (i) is not possible. The even case follows similarly. Suppose $d_2(1\otimes a)= t^2 \otimes 1$ and $d_2(1\otimes b)= 0$. By the derivation property of the differential, we have
\begin{displaymath}
d_2(t^k \otimes a^mb^n)= \left\{   \begin{array}{ll}
t^{k+2}\otimes a^{m-1}b^{n} & \textrm{if $m$ is odd}\\
0 & \textrm{if $m$ is even.}\\
\end{array} \right.
\end{displaymath}
Then the relation  $b^r + ab^{r-1} + \cdots + a^s b^{r-s}=0$ gives
\begin{align*}
  0    &=   d_2(1\otimes (b^r + ab^{r-1} + \cdots + a^s b^{r-s}))\\
   &=  d_2(1\otimes b^r) + d_2(1\otimes ab^{r-1}) + \cdots + d_2(1\otimes a^s b^{r-s})\\
   &= 0+ t^{2}\otimes b^{r-1}+ \cdots + t^{2}\otimes a^{s-1}b^{r-s}\\
   &=t^{2}\otimes (b^{r-1}+ \cdots + a^{s-1}b^{r-s}),
\end{align*}
a contradiction. Hence case (i) is not possible. The same argument works for case (ii) as well.\par 
Hence, we must have $d_2(1 \otimes a) = t^2 \otimes 1$ and $d_2(1\otimes b)=  t^2 \otimes 1$. If $s$ is even, then $a^{s+1}=0$ gives $$0= d_2(1 \otimes a^{s+1})= t^2 \otimes a^s,$$ a contradiction. Therefore, $s$ must be odd, and a similar argument shows that $r$ is also odd.
\end{proof}

As a consequence of Proposition \ref{real-z2-s-is-odd} and the previously defined $\mathbb{S}^1$-action on $\mathbb{R} H_{r,s}$, we obtain the following

\begin{corollary}
Let $1 < s< r$ and $r\not\equiv 2 \pmod 4$. Then $\mathbb{R}H_{r,s}$ admits a free involution if and only if both $r$ and $s$ are odd.
\end{corollary}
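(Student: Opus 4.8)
The statement is a direct consequence of the machinery already assembled, so the plan is to prove the two implications separately, each reducing to a result in hand.

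For sufficiency (if both $r$ and $s$ are odd, then a free involution exists), I would invoke the explicit construction of Section~\ref{sec3}. Writing $r = 2n+1$ and $s = 2m+1$, that section produces a free $\mathbb{S}^1$-action on $\mathbb{R}H_{r,s}$ via the diagonal action
$$\big(\xi, ([z_0,\dots,z_n],[w_0,\dots,w_m])\big) \longmapsto \big([\sqrt{\xi}\,z_0,\dots,\sqrt{\xi}\,z_n],[\sqrt{\xi}\,w_0,\dots,\sqrt{\xi}\,w_m]\big).$$
Restricting this action to the subgroup $\{\pm 1\} \cong \mathbb{Z}_2 \subset \mathbb{S}^1$ yields an involution on $\mathbb{R}H_{r,s}$. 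Since the ambient $\mathbb{S}^1$-action is free, its restriction to any nontrivial closed subgroup is again free; in particular the resulting involution is free, as already noted in Section~\ref{sec3}. This supplies the desired free $\mathbb{Z}_2$-action.

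For necessity (a free involution forces $r$ and $s$ to be odd), I would apply Proposition~\ref{real-z2-s-is-odd} directly. A free involution on $\mathbb{R}H_{r,s}$ is precisely a free $\mathbb{Z}_2$-action on the space $X = \mathbb{R}H_{r,s}$, which trivially satisfies $X \simeq_2 \mathbb{R}H_{r,s}$. Under the standing hypotheses $1 < s < r$ and $r \not\equiv 2 \pmod 4$, that proposition guarantees that both $r$ and $s$ are odd, completing this implication.

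Since both halves reduce to results already established, I do not anticipate a genuine obstacle. The only point deserving a line of care is the observation that restriction of a free circle action to $\mathbb{Z}_2$ remains free, which is immediate from the definition of a free action, as no nonidentity element of $\mathbb{S}^1$ (hence of the subgroup) fixes a point. The substantive work—the Leray--Serre spectral sequence computation that rules out even $r$ or $s$—is entirely contained in Proposition~\ref{real-z2-s-is-odd}, and the triviality of the induced action used there is supplied by Proposition~\ref{indu-real-z2}.
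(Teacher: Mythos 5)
Your proof is correct and follows exactly the paper's route: necessity is Proposition~\ref{real-z2-s-is-odd} applied to $X = \mathbb{R}H_{r,s}$, and sufficiency comes from restricting the free $\mathbb{S}^1$-action of Section~\ref{sec3} (which exists precisely when $r$ and $s$ are odd) to the subgroup $\mathbb{Z}_2 \subset \mathbb{S}^1$. Nothing further is needed.
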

\medskip
We have similar observations for the complex case.
\begin{proposition}\label{complex-z2-s-is-odd}
Let $ \mathbb{Z}_2$ act freely on  $X \simeq_2 \mathbb{C}H_{r,s} $ with $1 < s < r$ and $r\not\equiv 2 \pmod 4$. Then both $r$ and $s $ are odd. 
\end{proposition}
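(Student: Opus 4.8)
The plan is to mirror the proof of Proposition \ref{real-z2-s-is-odd} almost verbatim, transporting it through the degree-doubling that distinguishes the complex case from the real case. The key structural facts are identical: by Proposition \ref{indu-complex-z2} the induced action on $H^*(X)$ is trivial, so the Borel fibration $X \hookrightarrow X_G \longrightarrow B_G$ has a simple system of local coefficients and the spectral sequence takes the form $E_2^{p,q} \cong H^p(B_G) \otimes H^q(X)$, with $H^*(B_{\mathbb{Z}_2}) = \mathbb{Z}_2[t]$, $\deg t = 1$. The essential difference is that the generators $g, h \in H^2(X)$ now live in cohomological degree two rather than one.

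**First I would** argue that the spectral sequence cannot degenerate. If all differentials out of the generators vanished, the sequence would collapse at $E_2$ and $H^i(X/G) \cong H^i(X_G)$ would be nonzero for infinitely many $i$, contradicting Proposition \ref{z2-higher-coh-vanish} (which applies since $H^j(X) = 0$ for $j > 2(s+r-1)$). So some differential on a generator must be nonzero. Because $g$ and $h$ sit in $E_2^{0,2}$, the relevant differential is $d_3 : E_3^{0,2} \to E_3^{3,0}$, sending $1 \otimes g$ and $1 \otimes h$ to multiples of $t^3 \otimes 1$. As in the real case this yields three cases: (i) $d_3(1 \otimes g) = t^3 \otimes 1$, $d_3(1 \otimes h) = 0$; (ii) the symmetric possibility; and (iii) $d_3(1 \otimes g) = d_3(1 \otimes h) = t^3 \otimes 1$.

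**Next I would** rule out cases (i) and (ii) using the relation $h^r + gh^{r-1} + \cdots + g^s h^{r-s} = 0$. By the derivation property, $d_3(t^k \otimes g^m h^n)$ equals $t^{k+3} \otimes g^{m-1} h^n$ when $m$ is odd and $0$ when $m$ is even (squares are $d_3$-cocycles in characteristic two). Applying $d_3$ to the defining relation collapses it, exactly as in Proposition \ref{real-z2-s-is-odd}, to
\begin{displaymath}
0 = d_3\big(1 \otimes (h^r + gh^{r-1} + \cdots + g^s h^{r-s})\big) = t^3 \otimes (h^{r-1} + g^2 h^{r-2} + \cdots + g^{s-1} h^{r-s}),
\end{displaymath}
and the surviving sum of monomials is a nonzero element of $H^*(X)$, a contradiction; the symmetric argument disposes of (ii). Thus only case (iii) survives. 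Finally, in case (iii), if $s$ were even then $g^{s+1} = 0$ would force $0 = d_3(1 \otimes g^{s+1}) = t^3 \otimes g^s$, which is nonzero, a contradiction; hence $s$ is odd, and the same argument applied via the relation (or to $h$) shows $r$ is odd.

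**The main obstacle** I anticipate is purely bookkeeping rather than conceptual: one must be careful that the degree-two grading forces the first nontrivial differential to be $d_3$ rather than $d_2$, and verify that the intermediate pages $E_2 = E_3$ in the relevant range so that the derivation formula may be applied on the $E_3$-page. One should also confirm that the truncated relation $h^{r-1} + g^2 h^{r-2} + \cdots + g^{s-1}h^{r-s}$ genuinely represents a nonzero class — this follows because the monomials appearing are $\mathbb{Z}_2$-linearly independent in the quotient algebra below the top degree, just as in the real case.
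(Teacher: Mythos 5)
Your proposal is correct and takes essentially the same approach as the paper, which proves this proposition by transporting the real-case argument (Proposition \ref{real-z2-s-is-odd}) to the $d_3$ differential, using that $H^{\mathrm{odd}}(X)=0$ forces $d_2=0$ and $E_3=E_2$ (the details appear inside the proof of Theorem \ref{cohomo-complex-z2}). The only blemish is a typo in your ellipsis: the image of the relation should read $t^3\otimes\bigl(h^{r-1}+g^2h^{r-3}+\cdots+g^{s-1}h^{r-s}\bigr)$, i.e.\ $g^2h^{r-3}$ rather than $g^2h^{r-2}$, which does not affect the argument.
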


\begin{corollary}
Let $1< s< r$ and $r\not\equiv 2 \pmod 4$. Then $\mathbb{C}H_{r,s}$ admits a free involution if and only if both $r$ and $s$ are odd.
\end{corollary}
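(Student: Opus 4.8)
The plan is to run the same spectral-sequence argument as in Proposition \ref{real-z2-s-is-odd}, now using the complex cohomology of Theorem \ref{lem2.1}(2), where the generators $g, h$ sit in degree two. First I would set up the Borel fibration $X \hookrightarrow X_G \to B_G$ for $G = \mathbb{Z}_2$. By Proposition \ref{indu-complex-z2} (whose hypotheses $1<s<r$ and $r\not\equiv 2\pmod 4$ we inherit) the induced action on $H^*(X)$ is trivial, so the local system is simple and $E_2^{p,q} \cong H^p(B_G) \otimes H^q(X)$ with $H^*(B_G) = \mathbb{Z}_2[t]$, $\deg t = 1$.

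The decisive structural difference from the real case is that $H^*(X)$ is concentrated in even degrees, so $E_2^{p,q} = 0$ whenever $q$ is odd. Consequently $d_2 \equiv 0$ and $E_3 = E_2$, and the only possibly nonzero differential leaving the column of the fiber generators is $d_3 \colon E_3^{0,2} \to E_3^{3,0} = \langle t^3 \otimes 1\rangle$ (all later differentials out of $E^{0,2}$ land in negative fiber degree). As in the real argument, $d_3$ cannot vanish on both $g$ and $h$: otherwise $t, g, h$ would all be permanent cycles, the sequence would degenerate, and $H^*(X/G) \cong H^*(X_G)$ would be nonzero in arbitrarily high degrees, contradicting Proposition \ref{z2-higher-coh-vanish} (which bounds the cohomological dimension of the orbit space by $2(s+r-1)$). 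This leaves the three cases $(d_3 g, d_3 h) \in \{(t^3, 0), (0, t^3), (t^3, t^3)\}$.

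To rule out the first two mixed cases, I would feed the defining relation $h^r + g h^{r-1} + \cdots + g^s h^{r-s} = 0$ through $d_3$. Using the derivation property together with $d_3(g) = t^3$, $d_3(h) = 0$ (and that squares of generators are killed mod $2$), the image is $t^3 \otimes \bigl(\sum_{m \text{ odd}} g^{m-1} h^{r-m}\bigr)$, a sum of distinct basis monomials of $H^{2(r-1)}(X)$ that is therefore nonzero; since it must equal $d_3(0) = 0$, this is a contradiction, and symmetrically for the case $(0, t^3)$. Hence $d_3(g) = d_3(h) = t^3$. Now $g^{s+1} = 0$ forces $0 = d_3(1 \otimes g^{s+1}) = (s+1)\, t^3 \otimes g^s$, so $s+1$ is even and $s$ is odd. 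Finally, since $g^s h^r$ lies above the top dimension $2(s+r-1)$ it vanishes, and applying $d_3$ (using $s$ odd and the relation to simplify $g^{s-1}h^r = g^s h^{r-1}$) yields $0 = (1+r)\, t^3 \otimes g^s h^{r-1}$ with $g^s h^{r-1} \neq 0$ the top class, so $r$ is odd as well.

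The routine but error-prone part, which I expect to be the main obstacle, is the mod $2$ bookkeeping in the derivation computations: one must track exactly which monomials survive after reducing coefficients modulo two and then verify that the resulting classes are genuinely nonzero in $H^*(X)$. This rests on the explicit additive basis $\{g^i h^j : 0 \le i \le s,\ 0 \le j \le r-1\}$ coming from Theorem \ref{lem2.1}(2), and on the fact that the relevant classes live in degree $2(r-1) < 2r$, below the degree where the second defining relation acts, so that no cancellation can occur.
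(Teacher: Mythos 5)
Your spectral-sequence computation addresses only the ``only if'' half of the corollary. What you prove is (a version of) Proposition \ref{complex-z2-s-is-odd}: a free involution on a compact Hausdorff space $X \simeq_2 \mathbb{C}H_{r,s}$ forces both $r$ and $s$ to be odd. That part of your argument is correct, and it is essentially the paper's own proof (the paper proves the real case, Proposition \ref{real-z2-s-is-odd}, and asserts the complex case by the same argument with $d_3$ replacing $d_2$): evenness of $H^*(X)$ kills $d_2$, degeneration is excluded by Proposition \ref{z2-higher-coh-vanish}, the mixed cases for $d_3$ are ruled out by applying $d_3$ to the relation $h^r+gh^{r-1}+\cdots+g^sh^{r-s}=0$, and then $d_3(1\otimes g^{s+1})=0$ and $d_3(1\otimes g^sh^r)=0$ give the two parity conditions. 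However, the corollary is an equivalence, and your proposal contains nothing for the ``if'' direction: assuming $r$ and $s$ are both odd, you must exhibit a free involution on $\mathbb{C}H_{r,s}$ itself, and no cohomological computation of the above type can produce one. This is a genuine gap, not a routine omission.

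The missing half is supplied in the paper by the explicit construction of Section \ref{sec3.2}. For $s$ odd, the map $[z_0,z_1,\dots,z_{s-1},z_s]\mapsto[-\overline{z}_1,\overline{z}_0,\dots,-\overline{z}_s,\overline{z}_{s-1}]$ is a fixed-point-free involution on $\mathbb{C}P^s$ (induced by a quaternionic structure on $\mathbb{C}^{s+1}$), and likewise for $\mathbb{C}P^r$ with $r$ odd. The diagonal involution on $\mathbb{C}P^r\times\mathbb{C}P^s$ is then free, and it maps $\mathbb{C}H_{r,s}$ to itself: pairing the coordinates in twos shows that the involution transforms $\sum_{i=0}^{s}z_i\overline{w}_i$ into $\sum_{i=0}^{s}\overline{z}_i w_i$, the complex conjugate of the original expression, so the defining equation $\sum_{i=0}^{s}z_i\overline{w}_i=0$ is preserved. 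Restricting the diagonal involution to $\mathbb{C}H_{r,s}$ therefore gives the required free involution. Adding this construction (or an equivalent one) would complete your proof; as written it establishes only necessity.
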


For $\mathbb{S}^1$-actions, we have the following

\begin{proposition}\label{s1-action-real}
Let $1\leq s \leq r$. Then $\mathbb{S}^1$ acts freely on  $ \mathbb{R}H_{r,s} $ if and only if both $r$ and $s $ are odd. 
\end{proposition}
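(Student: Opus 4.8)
The forward (``if'') implication is already in hand: in Section~\ref{sec3}, when $r$ and $s$ are both odd we realised the coordinates of $\mathbb{R}P^r$ and $\mathbb{R}P^s$ as complex numbers and let $\mathbb{S}^1$ act diagonally by $\xi\cdot[z]=[\sqrt{\xi}\,z]$; this is free and preserves $\mathbb{R}H_{r,s}$. So my plan is devoted to the converse. Suppose $\mathbb{S}^1$ acts freely on $\mathbb{R}H_{r,s}$ and consider the Borel fibration $\mathbb{R}H_{r,s}\hookrightarrow X_{\mathbb{S}^1}\to B_{\mathbb{S}^1}$. Since $\mathbb{S}^1$ is connected, $B_{\mathbb{S}^1}=\mathbb{C}P^\infty$ is simply connected, the local system is simple, and the Leray--Serre spectral sequence has $E_2^{p,q}\cong \mathbb{Z}_2[u]\otimes H^q(\mathbb{R}H_{r,s})$ with $\deg u=2$. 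Because the action is free, $X_{\mathbb{S}^1}\simeq \mathbb{R}H_{r,s}/\mathbb{S}^1$ is a closed manifold of dimension $r+s-2$, so by the $\mathbb{S}^1$-version of Proposition~\ref{z2-higher-coh-vanish} recorded in Section~\ref{sec4} its cohomology vanishes in high degrees.

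This forces the spectral sequence not to degenerate: if $d_2$ vanished on the generators $a,b\in H^1(\mathbb{R}H_{r,s})$ then $d_2\equiv 0$, giving $E_\infty\supseteq \mathbb{Z}_2[u]\otimes 1$ and hence an infinite-dimensional orbit space. Writing $d_2(1\otimes a)=D(a)\,(u\otimes 1)$ and $d_2(1\otimes b)=D(b)\,(u\otimes 1)$ with $D(a),D(b)\in\{0,1\}$, we thus land in one of three cases, exactly as in Proposition~\ref{real-z2-s-is-odd}: (i) $D(a)=1,\,D(b)=0$; (ii) $D(a)=0,\,D(b)=1$; (iii) $D(a)=D(b)=1$. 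The engine throughout is that $d_2$ is a derivation and, mod $2$, $d_2(1\otimes a^mb^n)=u\otimes\bigl(mD(a)\,a^{m-1}b^n+nD(b)\,a^mb^{n-1}\bigr)$. I would rule out (i) and (ii) by applying this to the defining relation $\rho=b^r+ab^{r-1}+\cdots+a^sb^{r-s}=0$: the resulting class $u\otimes\Sigma$ must vanish, yet in each case $\Sigma$ is a non-empty sum of distinct basis monomials that cannot all have even coefficient (in case (i) the monomial $b^{r-1}$ occurs with coefficient $1$; in case (ii) the coefficients $r$ and $r-1$ of $b^{r-1}$ and $ab^{r-2}$ have opposite parity), contradicting $d_2(\rho)=0$.

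Hence we are in case (iii), $D(a)=D(b)=1$. Here I would apply $d_2$ to the relation $a^{s+1}=0$, obtaining $(s+1)\,u\otimes a^s=0$; since $a^s$ is a non-zero basis element this forces $s+1\equiv 0\pmod 2$, i.e.\ $s$ is odd. Applying $d_2$ to $\rho=0$ and collecting terms, the coefficient of the basis monomial $b^{r-1}$ works out to $r+1$, so $r+1\equiv 0\pmod 2$ and $r$ is odd as well. (The same computation gives the coefficient $r-s$ on $a^sb^{r-1-s}$, consistent with both being odd.) Together with the already-established ``if'' direction, this proves the equivalence.

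The only delicate point is the bookkeeping about which monomials survive. By Theorem~\ref{lem2.1} a $\mathbb{Z}_2$-basis of $H^*(\mathbb{R}H_{r,s})$ is $\{a^ib^j:0\le i\le s,\ 0\le j\le r-1\}$, and I would verify that every monomial produced by $d_2$ lies in this range and that those compared are pairwise distinct, so that no cancellation occurs. I expect the boundary cases, rather than the main transgression computation, to be the fiddly part: when $s=1$ the relation $a^{s+1}=a^2$ is uninformative (as $s+1$ is even), so the parity of $r$ must be extracted from $\rho=0$ alone while $s=1$ is odd automatically; and when $s=r$ the monomial $a^sb^{r-1-s}$ falls out of range, but then $a^{s+1}=a^{r+1}=0$ with $a^r\neq 0$ directly yields $r$ odd. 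As a consistency check, Floyd's Euler characteristic formula already excludes a free $\mathbb{S}^1$-action when $s$ is even and $r$ is odd, since there $\chi(\mathbb{R}H_{r,s})=1\neq 0$; the spectral sequence argument above is what handles the remaining mixed-parity cases uniformly.
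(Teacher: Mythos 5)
Your proposal is correct and takes essentially the same route as the paper: the paper's own proof considers the Leray--Serre spectral sequence of the $\mathbb{S}^1$ Borel fibration with $H^*(B_{\mathbb{S}^1};\mathbb{Z}_2)=\mathbb{Z}_2[u]$ and then argues ``as in the case of $\mathbb{Z}_2$-action'' (i.e., via the case analysis of Proposition~\ref{real-z2-s-is-odd}) that $d_2$ must be nonzero, that the only surviving possibility is $d_2(1\otimes a)=d_2(1\otimes b)=u\otimes 1$, and that applying $d_2$ to the relations $a^{s+1}=0$ and $b^r+ab^{r-1}+\cdots+a^sb^{r-s}=0$ forces both $s$ and $r$ odd, which is exactly the computation you spell out. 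Your additional treatment of the boundary cases $s=1$, $s=r$ and the Euler-characteristic cross-check are harmless refinements of the same argument rather than a different method.
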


\begin{proof}
For $G=\mathbb{S}^1$, since $\pi_1(B_G)=1$, the system of local coefficients is simple. Recall that, $H^*(B_{\mathbb{S}^1}; \mathbb{Z}_2)= \mathbb{Z}_2[u]$, where $\deg(u)=2$. Hence, the spectral sequence has the form $$E_2^{p,q} \cong H^p(B _G ) \otimes H^ q (X).$$ Clearly, for $p$ odd, $ E_2^{ p,q} = 0$. As in the case of $\mathbb{Z}_2$-action, it  can be seen that the differential $d_2$ must be non-zero and the only possibility for $d_2$ is $d_2(1 \otimes a) = u \otimes 1$ and $d_2(1 \otimes b) = u \otimes 1$. Consequently, both $r$ and $s $ must be odd.
\end{proof}
\bigskip

\section{Main results}\label{sec6}
We are now in a position to present our main results.\smallskip
\begin{theorem}\label{cohomo-real-z2} Let $G = \mathbb{Z}_2$ act freely on a compact Hausdorff space $X \simeq_2 \mathbb{R}H_{r,s} $ such that induced action on mod 2 cohomology is trivial.   Then  
  $$ H^*(X/G;\mathbb{Z}_2) \cong \mathbb{Z}_2[x,y,z,w]/ I, $$ 
  where  \[I=\Big\langle z^2, ~w^2-\gamma_1 zw - \gamma_2 x - \gamma_3 y, x^{\frac{s+1}{2}} + \alpha_0zwx^{\frac{s-1}{2}} + \alpha_1\hspace{.5mm}zwx^{\frac{s-3}{2}}y+\cdots+ \alpha_{\frac{s-1}{2}}\hspace{.5mm}zwy^{\frac{s-1}{2}},\]  \[  \hspace{3mm} (w+\beta_0z)y^{\frac{r-1}{2}}+\hspace{.5mm}(w+\beta_1z)xy^{\frac{r-3}{2}}+\cdots+ (w+\beta_{\frac{s-1}{2}}z)\hspace{.5mm}x^{\frac{s-1}{2}}y^{\frac{r-s}{2}}\Big\rangle ,\] 
with $\deg(x)=2$, $\deg(y)=2$, $\deg(z)=1$, $\deg(w)=1$ and $\alpha_i, \beta_i, \gamma_i \in \mathbb{Z}_2$.
  \end{theorem}
  
\begin{proof}
Let $a,b \in H^1(X)$ be generators of the cohomology algebra $H^*(X)$. By similar argument as in Proposition \ref{real-z2-s-is-odd}, we see that both $r$ and $s$ must be odd and $$d_2(1\otimes a) =t^2 \otimes 1 \hspace{2mm} \textrm{and} \hspace{2mm} d_2(1\otimes b)= t^2 \otimes 1.$$
By the derivation property of the differential, we have
\begin{displaymath}
d_2(1 \otimes a^mb^n) = \left\{ \begin{array}{ll}
t^2 \otimes a^{m-1}b^n +t^2 \otimes a^mb^{n-1} & \textrm{if $m$ and $n$ are odd}\\
t^2 \otimes a^{m-1}b^n & \textrm{if $m$ is odd and $n$ is even}\\
t^2 \otimes a^mb^{n-1} & \textrm{if $m$ is even and $n$ is odd}\\
0 & \textrm{if $m$ and $n$ are even.}\\
\end{array} \right.
\end{displaymath}
 It suffices to look at $$d_2:E_2^{0,q} \to E_2^{2,q-1}.$$
$\bullet$ For $q \leq s$, a basis of $E_2^{0,q} \cong H^q(X)$ consists of $$\{ a^q, a^{q-1}b, \dots,\hspace{1mm}ab^{q-1}, b^q \}.$$
If $q$ is even, then $\rk(\Ker d_2)= \frac{q}{2}+1$ and $\rk(\im d_2)= \frac{q}{2}$. If $q$ is odd, then $\rk(\Ker d_2)= \frac{q+1}{2}= \rk(\im d_2)$.\\
$\bullet$ For $s < q \leq r-1$, a basis consists of $$\{ a^sb^{q-s}, a^{s-1}b^{q-s+1}, \dots,\hspace{1mm}ab^{q-1}, b^q \}.$$ 
In this case, $\rk(\Ker d_2)= \frac{s+1}{2}= \rk( \im d_2)$.\\
$\bullet$ For $r\leq q \leq s+r-1$, a basis consists of $$\{ a^sb^{q-s}, a^{s-1}b^{q-s+1}, \dots, \hspace{1mm}a^{q-r+1}b^{r-1} \}.$$
If $q$ is odd, then $\rk(\Ker d_2)= \frac{s+r-1-q}{2}$ and $\rk(\im d_2)= \frac{s+r+1-q}{2}$. And, if $q$ is even, then $\rk(\Ker d_2)= \frac{r+s-q}{2} =  \rk(\im d_2)$.\par
 From the above observation, we get that for all $k \geq 2$ and for all $l$, $E_3^{k,l}=0$ as $\rk (E_3^{k,l})$ = 0. This gives
 \begin{displaymath}
 E_3^{k,l}= \left\{   \begin{array}{ll}
\Ker \{ d_2: E_2^{k,l} \to E_2^{k+2, l-1} \} & \textrm{$k$ = 0, 1 and for all $l$.}\\
   0 & \textrm{$k \geq 2$ and for all $l$.}\\
 \end{array} \right.
\end{displaymath}
 
 Note that $d_r : E_r^{k,l} \rightarrow  E_r^{k+r,l-r+1}$ is trivial for all $r \geq 3 $ and for all $k,l$, and hence $ E_\infty^{*,*} \cong E_3^{*,*}.$
Since $H^*(X_G) \cong$ Tot$E_{\infty}^{*,*}$, the total complex of $E_{\infty}^{*,*}$, we have $$H^n(X_G)\cong \bigoplus_{i+j=n}E_{\infty}^{i,j}= E_{\infty}^{0,n} \oplus E_{\infty}^{1,n-1}$$ for all $0 \leq n \leq r+s-1$.\par

Note that $t \otimes 1$ is a permanent cocycle and let $z=\pi^*(t) \in E_\infty^{1,0} \subseteq H^1(X_G) $ be determined by $t \otimes 1 \in E_2^{1,0}$. As $E_\infty^{2,0} = 0$, we have $z^2 = 0$.  Also, $1 \otimes (a+b) \in E_2^{0,1}$ is a permanent cocycle. Let $w \in H^1(X_G)$ such that $i^*(w)= a+b$.  Notice that, ${ 1 \otimes a^2 } \in  E_2^{0,2}$  and $1 \otimes b^2 \in E_2^{0,2}$  are permanent   
cocycles, and hence they determine elements in $E_\infty^{0,2} $. Let $x, y \in H^2(X_G)$ such that $i^*(x)=a^2$ and $i^*(y)=b^2$. As $ a^{s+1}=0, $ we get the following relation $$x^{\frac{s+1}{2}} + \alpha_0zwx^{\frac{s-1}{2}} + \alpha_1\hspace{.5mm}zwx^{\frac{s-3}{2}}y+\cdots+ \alpha_{\frac{s-1}{2}}\hspace{.5mm}zwy^{\frac{s-1}{2}}=0,$$ where $\alpha_i\in \mathbb{Z}_2$.
Notice that, $$i^*(wy^{\frac{r-1}{2}}+wxy^{\frac{r-3}{2}}+\cdots+ wx^{\frac{s-1}{2}}y^{\frac{r-s}{2}})=0.$$
Hence it satisfies
\[wy^{\frac{r-1}{2}}+wxy^{\frac{r-3}{2}}+\cdots+ wx^{\frac{s-1}{2}}y^{\frac{r-s}{2}}=\beta_0zy^{\frac{r-1}{2}}+\beta_1zxy^{\frac{r-3}{2}}+\beta_{\frac{s-1}{2}}zx^{\frac{s-1}{2}}y^{\frac{r-s}{2}},\]
where $\beta_i\in \mathbb{Z}_2$. Note that, we can write $ w^2 $ as the following 
$$w^2=\gamma_1 zw + \gamma_2 x + \gamma_3 y,$$ where $\gamma_i \in \mathbb{Z}_2$. Therefore 
$$H^*(X/G) \cong H^*(X_G) \cong \mathbb{Z}_2[x,y,z,w]/ I ,$$ where \[I=\Big\langle z^2, w^2-\gamma_1 zw - \gamma_2 x - \gamma_3 y, x^{\frac{s+1}{2}} + \alpha_0zwx^{\frac{s-1}{2}} + \alpha_1\hspace{.5mm}zwx^{\frac{s-3}{2}}y+\cdots+ \alpha_{\frac{s-1}{2}}\hspace{.5mm}zwy^{\frac{s-1}{2}},\]  \[  \hspace{3mm} (w+\beta_0z)y^{\frac{r-1}{2}}+\hspace{.5mm}(w+\beta_1z)xy^{\frac{r-3}{2}}+\cdots+ (w+\beta_{\frac{s-1}{2}}z)\hspace{.5mm}x^{\frac{s-1}{2}}y^{\frac{r-s}{2}}\Big\rangle ,\] 
with $\deg(x)=2$, $\deg(y)=2$, $\deg(z)=1$, $\deg(w)=1$ and $\alpha_i, \beta_i, \gamma_i \in \mathbb{Z}_2$.
\end{proof}\bigskip

For the complex case, we prove the following
\begin{theorem}\label{cohomo-complex-z2}
Let $G = \mathbb{Z}_2$ act freely on a compact Hausdorff space $X \simeq_2 \mathbb{C}H_{r,s}$, such that induced action on mod 2 cohomology is trivial. 
    Then  
    $$ H^*(X/G;\mathbb{Z}_2) \cong \mathbb{Z}_2[x,y,z,w]/ J, $$ 
    where \[J=\Big\langle z^3, ~w^2-\gamma_1 z^2w - \gamma_2 x - \gamma_3 y,\hspace{1mm} x^{\frac{s+1}{2}} + \alpha_0z^2wx^{\frac{s-1}{2}} + \alpha_1\hspace{.5mm}z^2wx^{\frac{s-3}{2}}y+\cdots+ \alpha_{\frac{s-1}{2}}\hspace{.5mm}z^2wy^{\frac{s-1}{2}},\] \[ (w+\beta_0z^2)y^{\frac{r-1}{2}}+\hspace{.5mm}(w+\beta_1z^2)xy^{\frac{r-3}{2}}+\cdots+ (w+\beta_{\frac{s-1}{2}}z^2)\hspace{.5mm}x^{\frac{s-1}{2}}y^{\frac{r-s}{2}}\Big\rangle ,\] 
    with $\deg(x)=4$, $\deg(y)=4$, $\deg(z)=1$,  $\deg(w)=2$ and $\alpha_i, \beta_i,\gamma_i \in \mathbb{Z}_2$.
  \end{theorem}
\begin{proof}
Let $G= \mathbb{Z}_2$ act freely on $X \simeq_2 \mathbb{C}H_{r,s} $.  Note that $E_2^{k,l}= 0$ for $l$ odd. This gives $$d_2:E_2^{k,l} \to E_2^{k+2,l-1}$$ is zero, and hence $E_3^{k,l}= E_2^{k,l}$ for all $k$, $l$. Let $a$, $b \in H^2(X)$ be generators of the cohomology algebra $H^*(X)$. As in the proof of the Theorem \ref{cohomo-real-z2}, the only possibility for $d_3$ is    $$d_3(1\otimes a)= t^3 \otimes 1 \hspace{2mm} \textrm{and} \hspace{2mm} d_3(1\otimes b)= t^3 \otimes 1.$$
Note that $r$ and $s$ must be odd. For various values of $l$, we consider the differentials \[d_3:E_3^{0,2l} \to E_3^{3,2l-2}.\] If we compute the ranks of $ \Ker d_3$ and $\im  d_3$, we get that $\rk (E_4^{k,2l})$ = 0 for all $k \geq 3$. This implies that $E_4^{k,2l}=0$ for all $k \geq 3$ and  $E_4^{k,2l}= \Ker \{ d_3: E_3^{k,2l} \to E_3^{k+3, 2l-2} \}$ for $k$ = 0, 1, 2. Also, $$d_r: E_r^{k,l} \to E_r^{k+r, l-r+1}$$
is zero for all $r \geq 4$. Hence $E_{\infty}^{*,*} \cong E_4^{*,*}$. Since $H^*(X_G) \cong$ Tot$E_{\infty}^{*,*}$, we get $$H^n(X_G)\cong \bigoplus_{i+j=n}E_{\infty}^{i,j}= E_{\infty}^{0,n} \oplus E_{\infty}^{1,n-1} \oplus E_{\infty}^{2,n-2}$$ for all $0 \leq p \leq 2(s+r-1)$.\par

Note that $ t\otimes 1 $ is a permanent cocycle and let $z=\pi^*(t) \in E_\infty^{1,0} \subseteq H^1(X_G) $ be determined by $t \otimes 1 \in E_2^{1,0}$. As $E_\infty^{3,0} = 0$, we have $z^3 = 0$. Also, $1 \otimes (a+b) \in E_2^{0,2}$ is a permanent cocycle. Let $w \in H^2(X_G)$ such that $i^*(w)= a+b$. Also, ${ 1 \otimes a^2 } \in  E_2^{0,4}$  and $1 \otimes b^2 \in E_2^{0,4}$  are permanent   cocycles, and hence they determine elements in $ E_\infty^{0,4} $. Let $x, y \in H^4(X_G)$ such that $i^*(x)=a^2$ and $ i^*(y)=b^2 $. As $ a^{s+1}=0 $, we get the following relation $$x^{\frac{s+1}{2}} + \alpha_0z^2wx^{\frac{s-1}{2}} + \alpha_1\hspace{.5mm}z^2wx^{\frac{s-3}{2}}y+\cdots+ \alpha_{\frac{s-1}{2}}\hspace{.5mm}z^2wy^{\frac{s-1}{2}}=0,$$ where $\alpha_i\in \mathbb{Z}_2$. Notice that, $$i^*(wy^{\frac{r-1}{2}}+wxy^{\frac{r-3}{2}}+\cdots+ wx^{\frac{s-1}{2}}y^{\frac{r-s}{2}})=0.$$ Hence it satisfies
\[wy^{\frac{r-1}{2}}+wxy^{\frac{r-3}{2}}+\cdots+ wx^{\frac{s-1}{2}}y^{\frac{r-s}{2}}=\beta_0z^2y^{\frac{r-1}{2}}+\beta_1z^2xy^{\frac{r-3}{2}}+\beta_{\frac{s-1}{2}}z^2x^{\frac{s-1}{2}}y^{\frac{r-s}{2}},\]
where $\beta_i\in \mathbb{Z}_2$.  Note that, $ w^2 $ satisfies the following relation
 $$w^2=\gamma_1 z^2w + \gamma_2 x + \gamma_3 y,$$ where $\gamma_i \in \mathbb{Z}_2$. Therefore $$H^*(X/G) \cong H^*(X_G) \cong \mathbb{Z}_2[x,y,z,w]/ J ,$$ 
where \[J=\Big\langle z^3, ~w^2-\gamma_1 z^2w - \gamma_2 x - \gamma_3 y, x^{\frac{s+1}{2}} + \alpha_0z^2wx^{\frac{s-1}{2}} +\cdots+ \alpha_{\frac{s-1}{2}}\hspace{.5mm}z^2wy^{\frac{s-1}{2}},\] \[ (w+\beta_0z^2)y^{\frac{r-1}{2}}+\hspace{.5mm}(w+\beta_1z^2)xy^{\frac{r-3}{2}}+\cdots+ (w+\beta_{\frac{s-1}{2}}z^2)\hspace{.5mm}x^{\frac{s-1}{2}}y^{\frac{r-s}{2}}\Big\rangle ,\] 
with $\deg(x)=4$, $\deg(y)=4$, $\deg(z)=1$,  $\deg(w)=2$ and $\alpha_i, \beta_i, \gamma_i \in \mathbb{Z}_2$.
This completes the proof.
\end{proof}
\bigskip
For $\mathbb{S}^1$ actions, we obtain the following
\begin{theorem}\label{cohomo-s1-real}
Let $G=\mathbb{S}^1$ act freely on a compact Hausdorff space $X \simeq_2 \mathbb{R}H_{r,s} $.  Then
  $$ H^*(X/G;\mathbb{Z}_2) \cong \mathbb{Z}_2[x,y,w]/ \langle  x^{\frac{s+1}{2}},~wy^{\frac{r-1}{2}}+xwy^{\frac{r-3}{2}}+\cdots+ wx^{\frac{s-1}{2}}y^{\frac{r-s}{2}}, ~w^2-\alpha x - \beta y \rangle, $$ 
where $\deg(x)=2$, $\deg(y)=2$, $\deg(w)=1$ and $\alpha, \beta \in \mathbb{Z}_2$.
\end{theorem}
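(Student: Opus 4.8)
The plan is to run the Leray--Serre spectral sequence of the Borel fibration $X\hookrightarrow X_G\stackrel{\pi}{\longrightarrow}B_G$ exactly as in the proofs of Theorems \ref{cohomo-real-z2} and \ref{cohomo-complex-z2}, the decisive simplification being that $H^*(B_{\mathbb{S}^1})=\mathbb{Z}_2[u]$ is concentrated in even degrees. First I would invoke Proposition \ref{s1-action-real} to record that $r$ and $s$ are odd and that $d_2(1\otimes a)=u\otimes 1=d_2(1\otimes b)$. Since $\deg u=2$, every odd column vanishes, $E_2^{p,q}=0$ for $p$ odd, and the only differential to analyse on $E_2$ is $d_2\colon E_2^{0,q}\to E_2^{2,q-1}$ together with its translates $E_2^{2k,q}\to E_2^{2k+2,q-1}$. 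The derivation property gives, verbatim as in Theorem \ref{cohomo-real-z2} but with $u$ replacing $t^2$, the four-case formula for $d_2(1\otimes a^mb^n)$; in every column the differential is $u^k\otimes D$ for the single derivation $D=\partial_a+\partial_b$ of degree $-1$ on $H^*(X)$, which is well defined because $D$ annihilates both defining relations when $r$ and $s$ are odd.

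The heart of the argument is the rank bookkeeping, which is identical to that carried out for Theorem \ref{cohomo-real-z2}. Splitting the fibre degree into the ranges $q\le s$, $s<q\le r-1$ and $r\le q\le r+s-1$, I would read off $\rk\ker(D|_{H^q})$ and $\rk\im(D|_{H^q})$ from the tables already established there. Comparing consecutive degrees, one checks in each range that $\rk\ker(D|_{H^q})=\rk\im(D|_{H^{q+1}})$, i.e. that the vertical complex $(H^*(X),D)$ is exact. Hence $E_3^{2k,q}=\ker(D|_{H^q})/\im(D|_{H^{q+1}})=0$ for every $k\ge 1$, while $E_3^{0,q}=\ker(D|_{H^q})$. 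As the odd columns vanish and $E_3$ is now supported in the single column $p=0$, every higher differential $d_r$ with $r\ge 3$ lands in a column of index $\ge r\ge 3$ and so is zero; thus $E_\infty=E_3$ is concentrated in $p=0$, the edge homomorphism $i^*\colon H^n(X_G)\to H^n(X)$ is injective with image $\ker(D|_{H^n})$, and $H^*(X/G)\cong H^*(X_G)\cong\ker D$ as graded algebras.

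It then remains to read off generators and relations through the injective $i^*$. The permanent cocycle $1\otimes(a+b)\in E_2^{0,1}$ gives $w\in H^1(X_G)$ with $i^*(w)=a+b$, and the permanent cocycles $1\otimes a^2,\,1\otimes b^2\in E_2^{0,2}$ give $x,y\in H^2(X_G)$ with $i^*(x)=a^2$ and $i^*(y)=b^2$. Injectivity of $i^*$ turns identities in $H^*(X)$ into relations in $H^*(X_G)$: from $a^{s+1}=0$ and $s$ odd I obtain $x^{\frac{s+1}{2}}=0$; applying $i^*$ to the proposed second generator of the ideal and expanding $(a+b)\sum_{j}a^{2j}b^{r-1-2j}=\sum_{i=0}^{s}a^{i}b^{r-i}=0$ recovers exactly the Milnor relation, so that combination vanishes; and $i^*(w^2)=(a+b)^2=a^2+b^2=i^*(x+y)$ together with $H^2(X_G)=\langle x,y\rangle$ forces $w^2=\alpha x+\beta y$ for some $\alpha,\beta\in\mathbb{Z}_2$. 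I would then confirm completeness by a degreewise dimension count, matching $\dim_{\mathbb{Z}_2}H^n(X_G)=\rk\ker(D|_{H^n})$ against the dimension of the proposed quotient algebra in each degree.

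I expect the exactness of $(H^*(X),D)$ --- equivalently, the collapse of all even columns $p\ge 2$ at the $E_3$ page --- to be the main obstacle, since this is precisely what removes the degree-one class $z=\pi^*(t)$ that appears in the $\mathbb{Z}_2$ theorems. Here $H^1(B_{\mathbb{S}^1})=0$, so there is no degree-one base class to begin with, while the degree-two base class $u$ satisfies $u\otimes 1=d_2(1\otimes a)$, hence is a boundary and $\pi^*(u)=0$; this is why no $z$ survives and why $i^*$ becomes injective. A secondary, more computational point is the completeness of the relation set, together with the observation that $\alpha$ and $\beta$ genuinely remain free parameters: in degenerate situations such as $s=1$, where the relation $x^{\frac{s+1}{2}}=x=0$ already kills $x=a^2$, the coefficient of $x$ in $w^2$ is undetermined, which is why the statement carries $\alpha,\beta$ rather than fixing them to $1$.
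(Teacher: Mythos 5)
Your proof is correct and takes essentially the same route as the paper: Proposition \ref{s1-action-real} pins down $d_2$ and the parity of $r$ and $s$, the rank bookkeeping from Theorem \ref{cohomo-real-z2} (which you repackage as exactness of the derivation complex $(H^*(X),D)$) collapses the spectral sequence to the single column $p=0$ at $E_3$, and the resulting injective edge homomorphism $i^*$ produces the generators $x,y,w$ and the three relations. One small caveat on your closing remark: since you have established that $i^*$ is injective, your own argument actually forces $w^2=x+y$ (that is, $\alpha=\beta=1$) whenever $a^2$ and $b^2$ are nonzero, so $\alpha,\beta$ are ``genuinely free'' only in degenerate cases such as $s=1$ where $x=0$; this does not affect the validity of your proof of the theorem as stated.
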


\begin{proof}
By Proposition \ref{s1-action-real}, the only possibility for the differential $d_2$ is that $d_2(1 \otimes a) = u \otimes 1$, $d_2(1 \otimes b) = u \otimes 1$ and both $r$ and $s$ are odd. As in the proof of Theorem \ref{cohomo-real-z2}, if we compute the ranks of  $\Ker d_2$ and $\im d_2$, we get $\rk (E_3^{k,l})$ = 0 and hence $E_3^{k,l}=0$ for all $k \geq 1$ and for all $l$. Also,  $E_3^{0,l}= \Ker \{ d_2: E_2^{0,l} \to E_2^{2, l-1} \}$ for all $l$.\par
Note that, $d_r : E_r^{k,l} \rightarrow  E_r^{k+r,l-r+1}$ is trivial for all $r \geq 3 $ and for all $k,l$.
Hence $ E_\infty^{*,*} = E_3^{*,*}.$
Since $H^*(X_G) \cong$ Tot$E_{\infty}^{*,*}$, we have $$H^n(X_G)\cong \bigoplus_{i+j=n}E_{\infty}^{i,j}= E_{\infty}^{0,n} $$ for all $0 \leq n \leq r+s-1$.\par
We see that $1 \otimes (a+b) \in E_2^{0,1}$ is a permanent cocycle. Let $w \in H^1(X_G)$ such that $i^*(w)= a+b$. Also, ${ 1 \otimes a^2 } \in  E_2^{0,2}$  and $1 \otimes b^2 \in E_2^{0,2}$  are permanent cocycles. Hence they determine elements in $ E_\infty^{0,2}$. Let $x,y  \in H^2(X_G)$ such that $i^*(x)=a^2$ and $ i^*(y)=b^2 $.  As $ a^{s+1}=0 $, we get  $x^{\frac{s+1}{2}} =0$. Note that $$i^*(wy^{\frac{r-1}{2}}+xwy^{\frac{r-3}{2}}+\cdots+ wx^{\frac{s-1}{2}}y^{\frac{r-s}{2}})=0.$$  Hence we get the following relation 
$$wy^{\frac{r-1}{2}}+xwy^{\frac{r-3}{2}}+\cdots+ wx^{\frac{s-1}{2}}y^{\frac{r-s}{2}}=0.$$
Note that, we can write $ w^2 $ as the following   $$w^2=\alpha x + \beta y,$$ for some $\alpha, \beta \in \mathbb{Z}_2$.  Therefore 
 $$H^*(X/G) \cong H^*(X_G) \cong \mathbb{Z}_2[x,y,w]/\langle  x^{\frac{s+1}{2}}, wy^{\frac{r-1}{2}}+xwy^{\frac{r-3}{2}}+\cdots+ wx^{\frac{s-1}{2}}y^{\frac{r-s}{2}}, w^2-\alpha x - \beta y \rangle,$$
 where $\deg(x)=2$, $\deg(y)=2$, $\deg(w)=1$ and $\alpha, \beta \in \mathbb{Z}_2$.  
\end{proof}

\begin{example}
Take $r=3$ and $s=1$. Recall that, $\mathbb{R}H_{3,1}$ is a 3-dimensional closed smooth manifold. A free $\mathbb{S}^1$-action on $\mathbb{R}H_{3,1}$ gives a  principal $\mathbb{S}^1$-bundle $\mathbb{R}H_{3,1} \to \mathbb{R}H_{3,1}/\mathbb{S}^1$ with compact 2-dimensional base. Now, using the Leray-Serre spectral sequence associated to the Borel fibration, one can see that $H^1\big(\mathbb{R}H_{3,1}/\mathbb{S}^1;\mathbb{Z}_2\big)\cong\mathbb{Z}_2$. Hence, the orbit space must be $\mathbb{R}P^2$ and its cohomology algebra matches with our result for $\beta=1$.
\end{example}

\begin{corollary}
Let $G=\mathbb{S}^1$ act freely  on a compact Hausdorff space $X \simeq_2 \mathbb{R}H_{r,s} $. Then the Euler class of the principal $G$-bundle 
$X \stackrel{q}{\longrightarrow} X/G$ is zero.
\end{corollary}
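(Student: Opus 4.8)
The plan is to identify the (mod $2$) Euler class of the principal bundle $q\colon X \to X/G$ with the image of the polynomial generator $u \in H^2(B_{\mathbb{S}^1})$ under the edge homomorphism of the Borel fibration, and then to read off its vanishing directly from the spectral sequence already computed in the proof of Theorem \ref{cohomo-s1-real}.

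First I would recall that freeness of the $\mathbb{S}^1$-action makes $h\colon X_G \to X/G$ a homotopy equivalence, so that $h^*\colon H^*(X/G) \stackrel{\cong}{\longrightarrow} H^*(X_G)$. Under this identification the classifying map of $q$ is homotopic to the Borel projection $\pi\colon X_G \to B_{\mathbb{S}^1}$, and hence the Euler class of $q$ corresponds to $\pi^*(u)$, the image of the universal Euler class $u \in H^2(B_{\mathbb{S}^1})$. By the description of the edge homomorphisms recalled in Section \ref{sec4}, this image is exactly the range of
$$\pi^*\colon H^2(B_{\mathbb{S}^1}) = E_2^{2,0} \longrightarrow E_\infty^{2,0} \subseteq H^2(X_G).$$

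Next I would invoke the differential determined in Proposition \ref{s1-action-real}: since the action is free, $d_2(1\otimes a) = u\otimes 1$, so $u \otimes 1 \in E_2^{2,0}$ lies in the image of $d_2\colon E_2^{0,1}\to E_2^{2,0}$. As the target $E_2^{4,-1}$ vanishes, we obtain $E_3^{2,0}=E_\infty^{2,0}=0$. This is consistent with Theorem \ref{cohomo-s1-real}, in which every degree-two class of $H^*(X/G)$ (namely $x$, $y$, and $w^2$) restricts nontrivially along $i^*$ and so cannot lie in $\im \pi^*$. Consequently $\pi^*(u)=0$, and therefore the Euler class of $q$ is zero.

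The only delicate point is the first step, namely the naturality statement identifying the Euler class of the $\mathbb{S}^1$-bundle $q$ with $\pi^*(u)$, equivalently recognizing $u$ as the transgression governing the Gysin sequence of $q$; once this is in place the conclusion is immediate from the already-computed $E_\infty$-term. I should also emphasize that, since all cohomology here is taken with $\mathbb{Z}_2$ coefficients, the statement concerns the mod $2$ Euler class.
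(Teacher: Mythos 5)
Your proof is correct, but it takes a genuinely different route from the paper's. The paper argues via the Gysin sequence of the principal bundle $q\colon X \to X/G$ itself: from Theorem \ref{cohomo-s1-real} it reads off $H^0(X/G)\cong H^1(X/G)\cong \mathbb{Z}_2$ and $H^1(X)\cong \mathbb{Z}_2\oplus\mathbb{Z}_2$, and then exactness of
$$0 \longrightarrow H^1(X/G) \stackrel{q^*}{\longrightarrow} H^1(X) \longrightarrow H^0(X/G) \stackrel{\smallsmile e}{\longrightarrow} H^2(X/G)$$
forces the map $H^1(X)\to H^0(X/G)$ to be surjective, so that cup product with $e$ annihilates $H^0(X/G)$, i.e.\ $e = e \smallsmile 1 = 0$. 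You instead identify the mod $2$ Euler class with $\pi^*(u)$ via the classifying-map identification $f \simeq \pi\eta$ (a fact the paper itself invokes, but only later, in Section \ref{sec7}), and then kill it by observing that $\pi^*$ factors through the edge term $E_\infty^{2,0}$, which vanishes because $d_2(1\otimes a)=u\otimes 1$ sweeps out all of $E_2^{2,0}$; the simplicity of the coefficient system needed for the edge-homomorphism description is automatic here since $\pi_1(B_{\mathbb{S}^1})=1$. Both arguments ultimately rest on the same differential from Proposition \ref{s1-action-real}, but they package it differently: your route is more conceptual, making visible \emph{why} the class dies (the universal class $u$ is hit by a differential, so the entire image of $\pi^*$ in positive degrees vanishes), and it needs only the spectral-sequence collapse rather than any statement about the ring $H^*(X/G)$ — in particular it is insensitive to the precise rank of $H^2(X/G)$, which the paper quotes but never actually uses. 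The paper's route is more elementary at this point in the text: it requires no classifying-space naturality, only the already-stated conclusion of Theorem \ref{cohomo-s1-real} and the Gysin sequence. Your closing caveats (the naturality step, and the fact that everything is mod $2$) are exactly the right points to flag, and both are handled by standard facts the paper already uses elsewhere.
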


\begin{proof}
From Theorem \ref{cohomo-s1-real}, we get $H^ i (X/G) =\mathbb{Z}_2$ for $i = 0, 1$ and $H^ 2 (X/G) = \mathbb{Z}_2 \oplus \mathbb{Z}_ 2$.  The Gysin sequence of the $G$-bundle  $X \stackrel{q}{\longrightarrow} X/G$ is
$$ 0{\longrightarrow} H^1(X/G) \stackrel{q^*}{\longrightarrow} H^1(X) \stackrel{}{\longrightarrow} H^0(X/G) \stackrel{\smallsmile e_{_{}}}{\longrightarrow}H^2(X/G)\stackrel{q^*}{\longrightarrow} \cdots, $$
where $e\in H ^2 (X/G)$ is the Euler class. The conclusion now follows from the sequence.
\end{proof}
\bigskip

\section{Applications to equivariant maps}\label{sec7}
Let $X$ be a compact Hausdorff space with a free involution and $\mathbb{S}^n$ the unit $n$-sphere equipped with the antipodal involution. Conner and Floyd \cite{conner} asked; for which integer $n$, there exists a $\mathbb{Z}_2$-equivariant map from $\mathbb{S}^n$ to $X$, but
no such map from $\mathbb{S}^{n+1}$ to $X$.\par

For $X = \mathbb{S}^n$, by the Borsuk-Ulam theorem, the answer to the preceding question is $n$. In the same paper, Conner and Floyd defined the index of the involution on $X$ as
$$\textrm{ind}(X) = \max ~ \{~ n ~|~ \textrm{there exists a}~ \mathbb{Z}_2 \textrm{-equivariant map}~ \mathbb{S}^n \to X \}.$$

The characteristic classes with  $\mathbb{Z}_2$ coefficients can be used to derive a cohomological criteria to study the above question. Let $w \in H^1(X/G; \mathbb{Z}_2 )$ be the Stiefel-Whitney class of the principal $G$-bundle $X \to X/G$. Conner and Floyd also defined
$$\textrm{{co-ind}}_{\mathbb{Z}_2}(X) = \max ~\{~n~|~ w^n \neq 0 \}.$$
Since $\textrm{{co-ind}}_{ \mathbb{Z}_2}(\mathbb{S}^n)$ = $n$, by \cite[(4.5)]{conner}, we obtain
$$\textrm{ind}(X)\leq \textrm{{co-ind}}_{\mathbb{Z}_2}(X).$$

Using these indices, we obtain the following results.

\begin{proposition}
Let $X \simeq_2 \mathbb{R}H_{r,s} $ be a compact Hausdorff space, where $1\leq s < r$. Then there is no $\mathbb{Z}_2$-equivariant map $\mathbb{S}^k \to X$ for $k \geq 2$. 
\end{proposition}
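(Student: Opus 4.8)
The plan is to control the index of the involution through the cohomological co-index set up just above. By definition, the existence of a $\mathbb{Z}_2$-equivariant map $\mathbb{S}^k \to X$ forces $\textrm{ind}(X) \geq k$, and we always have $\textrm{ind}(X) \leq \textrm{co-ind}_{\mathbb{Z}_2}(X)$. Hence it suffices to prove $\textrm{co-ind}_{\mathbb{Z}_2}(X) \leq 1$: this yields $k \leq \textrm{ind}(X) \leq 1$, which is impossible once $k \geq 2$. Since $\textrm{co-ind}_{\mathbb{Z}_2}(X) = \max\{n : w^n \neq 0\}$, where $w \in H^1(X/G)$ is the Stiefel-Whitney class of the double cover $X \to X/G$, the entire proposition reduces to the single identity $w^2 = 0$.

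Next I would locate $w$ inside the Borel construction. Under $h^*: H^*(X/G) \xrightarrow{\cong} H^*(X_G)$ the class $w$ corresponds to $\pi^*(t)$, where $t$ generates $H^*(B_{\mathbb{Z}_2}) = \mathbb{Z}_2[t]$; this is precisely the class called $z$ in Theorem \ref{cohomo-real-z2}, for which that computation records $z^2 = 0$. Thus $w^2 = \pi^*(t^2)$ is the image of $t^2 \otimes 1 \in E_2^{2,0}$ under the edge homomorphism, and $w^2 = 0$ is equivalent to $E_\infty^{2,0} = 0$. In the Leray-Serre spectral sequence of the Borel fibration the bottom row $E_2^{k,0} \cong H^k(B_{\mathbb{Z}_2})$ can be hit, in the column $k=2$, only by the transgression $d_2: E_2^{0,1} \to E_2^{2,0}$, since every higher differential $d_r: E_r^{2-r,\,r-1} \to E_r^{2,0}$ with $r \geq 3$ originates in a column of negative index and hence vanishes. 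Consequently $E_\infty^{2,0} = \operatorname{coker}\bigl(d_2: E_2^{0,1} \to E_2^{2,0}\bigr)$, so $w^2 = 0$ holds precisely when this $d_2$ surjects onto the one-dimensional target $\langle t^2 \otimes 1\rangle$.

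It remains to check that $d_2$ is surjective, which is exactly the mechanism already exploited in Propositions \ref{indu-real-z2} and \ref{real-z2-s-is-odd} and in the proof of Theorem \ref{cohomo-real-z2}. For a free involution $d_2$ cannot vanish on $E_2^{0,1}$, for otherwise the sequence would degenerate and produce $H^i(X/G) \neq 0$ for infinitely many $i$, contradicting the finite cohomological dimension of $X/G$ furnished by Proposition \ref{z2-higher-coh-vanish}; and the admissible non-trivial possibility forces the degree-one generators of $H^*(X)$ to transgress to $t^2 \otimes 1$, so that $d_2(E_2^{0,1}) = \langle t^2 \otimes 1\rangle$. Hence $E_\infty^{2,0} = 0$, giving $w^2 = 0$ and therefore $\textrm{co-ind}_{\mathbb{Z}_2}(X) \leq 1$, which proves the proposition.

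The step I expect to be the genuine obstacle is the last one: pinning down the transgression $d_2$ on $E_2^{0,1}$ for \emph{every} admissible pair $(r,s)$ with $1 \leq s < r$, including the boundary cases $s = 1$ and $r \equiv 2 \pmod 4$, where the induced action on $H^*(X)$ need not be trivial and the simple-coefficient form of the $E_2$-page used in Theorem \ref{cohomo-real-z2} is no longer available. In those cases I would first record the induced action on $H^1(X)$ along the lines of the remark following Proposition \ref{indu-real-z2}, pass to the invariants $E_2^{0,1} = H^1(X)^{G}$ on which $d_2$ is defined, and verify directly that $d_2$ is still nonzero there. This is what distinguishes $\mathbb{R}H_{r,s}$, whose degree-one part is spanned by transgressive classes, from a generic free $\mathbb{Z}_2$-space, where $w$ could have nilpotency degree larger than $2$ and the conclusion would fail.
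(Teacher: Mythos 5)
Your proposal is correct and follows essentially the same route as the paper: reduce via $\textrm{ind}(X)\leq\textrm{co-ind}_{\mathbb{Z}_2}(X)$ to proving $w^2=0$, identify $w$ with the class $z=\pi^*(t)$ of Theorem \ref{cohomo-real-z2}, and obtain $z^2=0$ from $E_\infty^{2,0}=0$, i.e.\ from the surjectivity of the transgression $d_2:E_2^{0,1}\to E_2^{2,0}$ forced by the non-degeneration argument of Proposition \ref{real-z2-s-is-odd}. The boundary cases you flag as the genuine obstacle ($s=1$ or $r\equiv 2\pmod 4$, where triviality of the induced action is not guaranteed) are left equally unaddressed by the paper, whose proof simply invokes the proof of Theorem \ref{cohomo-real-z2} under its trivial-action hypothesis, so on this point your write-up is in fact the more careful of the two.
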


\begin{proof}
Take a classifying map $$f : X/G \to B_G$$ for the principal $G$-bundle $X \to X/G$. Let $\eta: X/G \to X_G$ is a homotopy inverse of the homotopy equivalence $h:X_G \to X/G$. Then $\pi \eta : X/G \to B_G$ also classifies the principal $G$-bundle $X \to X/G$, and hence it is homotopic to $f$. Therefore it suffices to consider the map $$\pi^*: H^1(B_G) \to H^1(X_G).$$ The image of the Stiefel-Whitney class of the universal principal $G$-bundle $G \hookrightarrow E_G \longrightarrow B_G$ is the Stiefel-Whitney class of $X \to X/G$.
For $X \simeq_2 \mathbb{R}H_{r,s} $, using the proof of Theorem \ref{cohomo-real-z2}, we see that $x \in H^1(X/G)$ is the Stiefel-Whitney class with $x \neq 0$ and $x^2=0$. This gives $\textrm{{co-ind}}_{\mathbb{Z}_2}(X) = 1$ and $\textrm{ind}(X)\leq 1$. Hence, there is no $\mathbb{Z}_2$-equivariant map $\mathbb{S}^k \to X$ for $k \geq 2$.
\end{proof}

\begin{proposition}
Let $X \simeq_2 \mathbb{C}H_{r,s} $ be a compact Hausdorff space, where $1\leq s < r$. Then there is no $\mathbb{Z}_2$-equivariant map $\mathbb{S}^k \to X$ for $k \geq 3$. 
\end{proposition}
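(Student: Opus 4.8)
The plan is to mirror the proof of the preceding proposition, replacing the real cohomology computation of Theorem \ref{cohomo-real-z2} by its complex counterpart in Theorem \ref{cohomo-complex-z2}. As there, I would first fix a classifying map $f\colon X/G \to B_G$ for the principal $G$-bundle $X \to X/G$, and use the homotopy equivalence $h\colon X_G \to X/G$ to reduce the analysis to the edge homomorphism $\pi^*\colon H^1(B_G) \to H^1(X_G)\cong H^1(X/G)$. The image $\pi^*(t)=z$ of the universal Stiefel--Whitney class $t\in H^1(B_G)$ is then exactly the Stiefel--Whitney class of the bundle $X\to X/G$, so the whole problem reduces to computing its height $\textrm{{co-ind}}_{\mathbb{Z}_2}(X)=\max\{n\mid z^n\neq 0\}$.

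The next step is to read this height off from Theorem \ref{cohomo-complex-z2}. The degree-one generator appearing there is precisely $z=\pi^*(t)$, and the defining ideal $J$ contains the relation $z^3=0$; on the other hand $z^2\neq 0$, since in the Borel spectral sequence $t^2\otimes 1\in E_2^{2,0}$ cannot be hit by any differential (the only candidate source $E_r^{2-r,\,r-1}$ has negative fibre degree for $r\ge 3$) and hence survives to $E_\infty^{2,0}\cong\mathbb{Z}_2$, whereas $t^3\otimes 1\in E_2^{3,0}$ is killed by $d_3$, forcing $E_\infty^{3,0}=0$. Thus $\textrm{{co-ind}}_{\mathbb{Z}_2}(X)=2$, and the inequality $\textrm{ind}(X)\le \textrm{{co-ind}}_{\mathbb{Z}_2}(X)$ from \cite[(4.5)]{conner} gives $\textrm{ind}(X)\le 2$. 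Since a $\mathbb{Z}_2$-equivariant map $\mathbb{S}^k\to X$ would place $k$ in the set defining $\textrm{ind}(X)$ and hence force $\textrm{ind}(X)\ge k$, no such map can exist for $k\ge 3$.

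The step I expect to carry the real weight is pinning down the height of $z$ exactly, that is, establishing both $z^2\neq 0$ and $z^3=0$ rather than merely the nilpotence of $z$. Both are structural consequences of the fact that a complex Milnor manifold satisfies $H^{\mathrm{odd}}(X)=0$: this forces $E_2^{k,l}=0$ for $l$ odd, hence $d_2=0$, so the bottom row $E_*^{*,0}=\mathbb{Z}_2[t]$ can only be truncated by the first possibly nonzero differential $d_3$, originating in fibre degree two. The survival of $t^2\otimes 1$ is then automatic, while the vanishing of $t^3\otimes 1$ is exactly what freeness buys: if $d_3$ were zero on the fibre-degree-two row, then, the algebra generators sitting in fibre degree two, all higher differentials would vanish for degree reasons and the sequence would degenerate, making $H^*(X/G)$ infinite dimensional and contradicting Proposition \ref{z2-higher-coh-vanish}. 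Hence $d_3$ is nontrivial and kills $t^3\otimes 1$, giving $z^3=0$. Finally, I would remark that although Theorem \ref{cohomo-complex-z2} is stated for $1<s<r$ with $r\not\equiv 2\pmod 4$ and trivial induced action (the triviality being automatic in that range by Proposition \ref{indu-complex-z2}), the co-index computation above only uses the even-concentration of $H^*(X)$ together with freeness, so it applies uniformly throughout the range $1\le s<r$.
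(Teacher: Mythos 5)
Your core argument is the paper's own proof with the implicit steps written out: reduce via the classifying-map argument to the edge homomorphism, identify the Stiefel--Whitney class of $X \to X/G$ with $z=\pi^*(t)$, extract $z^2\neq 0$ and $z^3=0$ from the spectral-sequence analysis underlying Theorem \ref{cohomo-complex-z2}, and conclude $\mathrm{ind}(X)\le \textrm{co-ind}_{\mathbb{Z}_2}(X)=2$. The details you supply are the right ones (one terminological slip: for $r\ge 3$ the candidate source $E_r^{2-r,\,r-1}$ has negative \emph{base} degree, not fibre degree), and your observation that one only needs $d_3$ to be nonzero somewhere on the fibre-degree-two row, rather than its exact values on $1\otimes a$ and $1\otimes b$, is a small economy over the paper.

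The problem is your closing remark. The degeneration argument does \emph{not} use only even-concentration of $H^*(X)$ and freeness: it uses that the $E_2$-page is multiplicatively generated by the bottom row together with $1\otimes a,\,1\otimes b\in E_2^{0,2}$, and this presupposes a simple system of local coefficients, i.e.\ a trivial induced action on $H^*(X)$. Proposition \ref{indu-complex-z2} guarantees triviality only for $1<s<r$ with $r\not\equiv 2\pmod 4$. If the involution acted on $H^2(X)$ by, say, $a\mapsto a$, $b\mapsto a+b$, then $H^2(X)$ would be a free $\mathbb{Z}_2[G]$-module, so $E_2^{k,2}=0$ for all $k>0$ and $E_2^{0,2}$ is one-dimensional; the page is then not generated by fibre-degree-two classes, and vanishing of $d_3$ on row two no longer forces total degeneration (the bottom row in high degrees could a priori be truncated by $d_5, d_7,\dots$ acting from higher rows), so the contradiction with Proposition \ref{z2-higher-coh-vanish} evaporates. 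Worse, in that twisted situation nothing can ever hit position $(4,0)$ --- the only candidate sources $(2,1)$, $(1,2)$, $(0,3)$ all vanish --- so $t^4\otimes 1$ survives, $z^4\neq 0$, and the co-index is at least $4$: the ind $\le$ co-ind route cannot then yield the proposition at all; one would instead have to rule out such actions or argue differently. To be fair, the paper has the same gap, since its proof also invokes Theorem \ref{cohomo-complex-z2} outside that theorem's stated hypotheses while claiming the full range $1\le s<r$; but where the paper is merely silent, your remark makes a positive uniformity claim whose proposed justification demonstrably breaks.
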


\begin{proof}
From the proof of Theorem \ref{cohomo-complex-z2}, $x \in H^1(X/G)$ is the Stiefel-Whitney class with $x^2 \neq 0$ and $x^3=0$. This gives $\textrm{{co-ind}}_{\mathbb{Z}_2}(X) = 2$ and $\textrm{ind}(X)\leq 2$. Hence, there is no $\mathbb{Z}_2$-equivariant map $\mathbb{S}^k \to X$ for $k \geq 3$.
\end{proof}

Given a $G$-space $X$, Volovikov \cite{volovikov} defined another numerical index $i(X)$ as the smallest $r$ such that for some $k$, the differential
$$d _r : E_r ^{k-r,r-1} \rightarrow E_r^{ k,0}$$ in the Leray-Serre spectral sequence of the fibration $X \stackrel i \hookrightarrow X_G \stackrel\pi \longrightarrow B_G $ is non-trivial.
It is clear that $i(X) = r$ if $E_2^{ k,0} = E_3 ^{k,0} = \cdots = E _r ^{k,0}$ for all $k$ and $E _r^{ k,0} \neq E _{r+1}^{k,0}$ for some $k$. If $E _2^{ *,0} = E _\infty^{*,0},$ then $i(X)= \infty $. Thus, $i(X)$ is either an integer greater than 1 or $\infty$. Using this index, Coelho, Mattos and Santos proved the following \cite[Theorem 1.1]{coelho} result.

\begin{proposition} \label{mattos-result} 
Let $G$ be a compact Lie group and $X$, $Y$ be path-connected compact Hausdorff spaces with free $G$-actions. Suppose that $i(X) \geq m +1$ for some natural m $\geq$ 1. If $H ^{k+1} (Y /G; \mathbb{Z}_2) = 0 $ for some $1 \leq k < m$ and $0 < \rk \big(H^{k+1} (B _G)\big)$, then there is no G-equivariant map $f : X \rightarrow Y$.\end{proposition}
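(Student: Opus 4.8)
The plan is to argue by contradiction, following the standard strategy for index-based non-existence results for equivariant maps. Suppose there were a $G$-equivariant map $f \colon X \to Y$. By functoriality of the Borel construction, $f$ induces a map $f_G \colon X_G \to Y_G$ of the associated Borel fibrations over the common base $B_G$, commuting with the projections to $B_G$ and restricting to $f$ on the fibers. This in turn induces a morphism of the corresponding Leray--Serre spectral sequences that is the identity on the base factor $H^*(B_G)$. I would set up this morphism carefully, since all subsequent comparisons of differentials rely on it.

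The key computation is to track what the hypothesis $i(X) \ge m+1$ forces. By definition of Volovikov's index, $i(X) \ge m+1$ means that in the spectral sequence for $X$, the differentials $d_r \colon E_r^{k-r,r-1} \to E_r^{k,0}$ are \emph{trivial} for all $r \le m$ and all $k$, so that $E_2^{k,0} = E_3^{k,0} = \cdots = E_{m+1}^{k,0}$; equivalently, the edge homomorphism $\pi^* \colon H^{k}(B_G) \to H^{k}(X_G)$ is injective for $k \le m$. The plan is to use this to show that $\pi^*_X \colon H^{k+1}(B_G) \to H^{k+1}(X_G)$ is injective for the relevant $k < m$, so a nonzero class in $H^{k+1}(B_G)$ survives to a nonzero class in $H^{k+1}(X_G)$.

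On the $Y$ side I would exploit the vanishing hypotheses. Since $H^{k+1}(Y/G) \cong H^{k+1}(Y_G) = 0$ by the homotopy equivalence $h$ recalled in Section~\ref{sec4}, the only way a class of $H^{k+1}(B_G)$ can map into $H^{k+1}(Y_G)$ is to map to zero; concretely, the nonzero generator of $H^{k+1}(B_G)$ (which exists since $\rk\big(H^{k+1}(B_G)\big) > 0$) must die in the spectral sequence for $Y$, i.e. $\pi^*_Y$ kills it. The contradiction then comes from the commutative square $f_G^* \circ \pi_Y^* = \pi_X^*$: the generator of $H^{k+1}(B_G)$ maps to a nonzero element under $\pi_X^*$ (by injectivity from the $X$ side) but must factor through $\pi_Y^* = 0$ (from the $Y$ side), which is impossible.

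The main obstacle, and the step requiring the most care, is converting the index condition $i(X) \ge m+1$ into the precise injectivity statement for $\pi_X^*$ in the exact degree $k+1$ and reconciling it with the constraint $1 \le k < m$; I would need to verify that the surviving image of $H^{k+1}(B_G)$ in $E_\infty^{k+1,0}$ is genuinely nonzero and sits inside $H^{k+1}(X_G)$ as a nonzero permanent cocycle, rather than being truncated by some later differential $d_r$ with $r > m$. Handling the interplay between the two spectral sequences through the single map $f_G$, and ensuring the degree bookkeeping matches the hypotheses on $k$ and $m$, is where the argument must be pinned down precisely.
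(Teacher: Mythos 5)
Your proposal is correct, but note that the paper itself does not prove Proposition \ref{mattos-result}: it is quoted verbatim from Coelho, de Mattos and dos Santos \cite[Theorem 1.1]{coelho}, so there is no in-paper argument to compare against. Your argument is essentially the standard one (and the one underlying the cited reference): functoriality of the Borel construction gives $f_G\colon X_G\to Y_G$ with $\pi_Y\circ f_G=\pi_X$, hence $\pi_X^*=f_G^*\circ\pi_Y^*$; since $Y_G\simeq Y/G$ and $H^{k+1}(Y/G;\mathbb{Z}_2)=0$, the map $\pi_Y^*$ vanishes in degree $k+1$, while the index hypothesis forces $\pi_X^*$ to be injective in that degree, contradicting $\rk\big(H^{k+1}(B_G)\big)>0$. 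The one step you defer to the end — that no differential $d_r$ with $r>m$ can truncate the bottom row in degree $k+1$ — is immediate and you should state it: a differential into $E_r^{k+1,0}$ has source $E_r^{k+1-r,r-1}$, which is zero unless $r\le k+1$; since $k+1\le m$, all such $r$ satisfy $r\le m$ and the corresponding differentials vanish by the definition of $i(X)\ge m+1$, so $E_2^{k+1,0}=E_\infty^{k+1,0}$ and the edge homomorphism $\pi_X^*$ is injective in degree $k+1$ (here path-connectedness of $X$ guarantees $E_2^{k+1,0}\cong H^{k+1}(B_G;\mathbb{Z}_2)$ even without assuming a simple coefficient system, since the $\pi_1(B_G)$-action on $H^0(X;\mathbb{Z}_2)=\mathbb{Z}_2$ is necessarily trivial). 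One small imprecision: your parenthetical ``equivalently'' overstates the relation — injectivity of the edge maps in degrees $\le m$ follows from $i(X)\ge m+1$ but is strictly weaker than it (a nontrivial $d_2$ hitting $E_2^{k,0}$ for large $k$ is compatible with injectivity in low degrees); only the implication you actually use is valid, so this does not affect the proof.
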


The preceding result together yields the following

\begin{proposition}
Suppose $\mathbb{Z}_2$ acts freely on $X \simeq_2 \mathbb{C}H_{r,s} $ and a path-connected compact Hausdorff space Y such that  $H^2(Y/G)=0$. Then there is no $\mathbb{Z}_2$-equivariant map $X \to Y.$ 
\end{proposition}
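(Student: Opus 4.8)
The plan is to reduce the statement to a direct application of Proposition \ref{mattos-result} with $G = \mathbb{Z}_2$ and $m = 2$. The only genuine work is to verify the hypothesis $i(X) \geq m+1 = 3$ on the Volovikov index of $X$; the remaining two conditions will then be read off immediately.

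First I would establish the lower bound $i(X) \geq 3$. Recall from Theorem \ref{lem2.1} that $H^*(\mathbb{C}H_{r,s})$ is generated by classes of degree two, so the mod 2 cohomology of $X \simeq_2 \mathbb{C}H_{r,s}$ is concentrated in even degrees; in particular $H^l(X) = 0$ for every odd $l$. Consequently, in the Leray-Serre spectral sequence of the Borel fibration $X \hookrightarrow X_G \to B_G$ one has $E_2^{k,l} = H^k(B_G; \mathcal{H}^l(X)) = 0$ whenever $l$ is odd, since the sheaf $\mathcal{H}^l(X)$ then has trivial stalk $H^l(X) = 0$, and this holds regardless of whether the local system is simple. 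The differential reaching the bottom row at the second stage, $d_2 \colon E_2^{k-2,1} \to E_2^{k,0}$, therefore has trivial source for every $k$, so $E_2^{k,0} = E_3^{k,0}$ for all $k$. By the characterization of $i(X)$ recalled before Proposition \ref{mattos-result}, this forces $i(X) \geq 3$. (Finiteness of $i(X)$ is itself guaranteed because the free $\mathbb{Z}_2$-action makes $H^*(X/G)$ bounded by Proposition \ref{z2-higher-coh-vanish}, so some differential into the bottom row must eventually be nonzero; but only the inequality $i(X) \geq 3$ is needed below.)

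With $m = 2$ the hypothesis $i(X) \geq m+1$ holds, and it remains to check the two arithmetic conditions of Proposition \ref{mattos-result} for the choice $k = 1$, which satisfies $1 \leq k < m$. The cohomological condition $H^{k+1}(Y/G) = H^2(Y/G) = 0$ is exactly the assumption on $Y$. For the last condition, $H^*(B_{\mathbb{Z}_2}) = \mathbb{Z}_2[t]$ with $\deg(t) = 1$, so $H^{k+1}(B_G) = H^2(B_{\mathbb{Z}_2}) = \mathbb{Z}_2\langle t^2 \rangle$ has positive rank. Both spaces are compact Hausdorff with free $\mathbb{Z}_2$-actions and are path-connected ($X$ is connected since $H^0(X) = \mathbb{Z}_2$), so Proposition \ref{mattos-result} applies and yields the nonexistence of a $\mathbb{Z}_2$-equivariant map $X \to Y$.

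The main obstacle is the index computation, and the point worth emphasizing is that it needs neither the restriction $r \not\equiv 2 \pmod 4$ nor the triviality of the induced action used in Theorem \ref{cohomo-complex-z2}: the vanishing of the odd-degree cohomology of $X$ alone kills $d_2$ on the bottom row and delivers $i(X) \geq 3$. Getting the index bookkeeping in Proposition \ref{mattos-result} to line up, namely choosing $m = 2$ and $k = 1$ so that $k+1 = 2$ simultaneously matches the given vanishing $H^2(Y/G) = 0$ and the nonvanishing $H^2(B_{\mathbb{Z}_2}) \neq 0$, is the only subtle step.
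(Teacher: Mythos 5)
Your proposal is correct and follows the same overall strategy as the paper: bound the Volovikov index of $X$ below by $3$ and invoke Proposition \ref{mattos-result} with $m=2$, $k=1$. The one place you genuinely diverge is in how the index bound is obtained, and your version is better matched to the statement as given. The paper's proof simply cites ``$i(X)=3$ from the proof of Theorem \ref{cohomo-complex-z2}''; but that theorem carries the extra hypothesis that the induced action on mod $2$ cohomology is trivial, an assumption absent from the present proposition, and it also does more work than needed (identifying the non-trivial $d_3$ to pin the index down exactly). Your observation that $H^{l}(X)=0$ for all odd $l$ forces $E_2^{k,l}=0$ for $l$ odd --- valid for any local coefficient system, since the stalks vanish --- hence $E_2^{k,0}=E_3^{k,0}$ for all $k$ and $i(X)\geq 3$, isolates exactly the piece of that computation that the application requires, and it is hypothesis-free. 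So your write-up is self-contained where the paper's is a citation, and it quietly closes the small gap between the hypotheses of Theorem \ref{cohomo-complex-z2} and those of the proposition. One pedantic caveat: $H^0(X)=\mathbb{Z}_2$ gives connectedness (in the \v{C}ech sense), not path-connectedness, which Proposition \ref{mattos-result} formally requires; the paper's own proof glosses over the identical point, so this is not a defect of your argument relative to the source.
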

\begin{proof}
Note that, we obtained $i(X)=3$ in the proof of Theorem \ref{cohomo-complex-z2}. Now the result is a consequence of Proposition \ref{mattos-result}.
\end{proof}
\bigskip

Let $G$ be a finite group considered as a 0-dimensional simplicial complex and $X$ a paracompact space with a free $G$-action. The Schwarz genus $\textrm{g}_\textrm{free}(X,G)$ of the free $G$-space $X$ is the smallest number $n$ such that there exists a $G$-equivariant map $$X \to  G\ast \cdots \ast G,$$
 the $n$-fold join of $G$ equipped with the diagonal $G$-action. Note that for $G=\mathbb{Z}_2$, the free genus is the least integer $n$ for which there exists a $\mathbb{Z}_2$-equivariant map $f:X \to S^{n-1}$. See \cite[Chapter V]{schwarz} for the original source and \cite{bartsch, volovikov} for more details
and applications. In the literature, the free genus for $G=\mathbb{Z}_2$ is known under different names, for example, $B$-index \cite{yangii}, co-index \cite{conner}, level \cite{pfister}.

\begin{proposition}\label{genus-bound}
Let $X \simeq_2 \mathbb{C}H_{r,s} $ be a compact Hausdorff space with a free $\mathbb{Z}_2$-action. Then $\textrm{g}_\textrm{free}(X,\mathbb{Z}_2)\geq3$. In particular, there does not exist any $\mathbb{Z}_2$-equivariant map  $X \to \mathbb{S}^1$.
\end{proposition}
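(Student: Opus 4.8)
The plan is to deduce the genus bound directly from the co-index computation already available in the complex case. Recall that for $G=\mathbb{Z}_2$ the free genus $\textrm{g}_\textrm{free}(X,\mathbb{Z}_2)$ is the least integer $n$ for which there is a $\mathbb{Z}_2$-equivariant map $X \to \mathbb{S}^{n-1}$, since the $n$-fold join of $\mathbb{Z}_2 = \mathbb{S}^0$ (with the antipodal action) is $\mathbb{S}^{n-1}$. Hence proving $\textrm{g}_\textrm{free}(X,\mathbb{Z}_2)\geq 3$ is exactly the problem of ruling out equivariant maps $X \to \mathbb{S}^0$ and $X \to \mathbb{S}^1$.

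The key mechanism I would use is naturality of the Stiefel-Whitney class under passage to orbit spaces. A $\mathbb{Z}_2$-equivariant map $f:X \to \mathbb{S}^{n-1}$ descends to a map $\bar f : X/G \to \mathbb{R}P^{n-1}$ classifying the double cover, and $\bar f^*$ sends the first Stiefel-Whitney class of $\mathbb{S}^{n-1}\to\mathbb{R}P^{n-1}$ to the Stiefel-Whitney class $w \in H^1(X/G)$ of $X \to X/G$. Since the generator of $H^1(\mathbb{R}P^{n-1})$ satisfies $w^n = 0$, naturality forces $w^n = 0$ in $H^*(X/G)$. Equivalently, the existence of such a map gives $\textrm{co-ind}_{\mathbb{Z}_2}(X) < n$, so that in general $\textrm{g}_\textrm{free}(X,\mathbb{Z}_2) \geq \textrm{co-ind}_{\mathbb{Z}_2}(X)+1$.

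It then remains to feed in the co-index. From the proof of Theorem \ref{cohomo-complex-z2}, the Stiefel-Whitney class $w = x \in H^1(X/G)$ of $X \to X/G$ satisfies $x^2 \neq 0$ and $x^3 = 0$, so $\textrm{co-ind}_{\mathbb{Z}_2}(X) = 2$. Plugging this into the inequality yields $\textrm{g}_\textrm{free}(X,\mathbb{Z}_2) \geq 3$. Concretely, an equivariant map $X \to \mathbb{S}^1$ would descend to $\bar f : X/G \to \mathbb{R}P^1$ and force $x^2 = \bar f^*(w^2) = 0$, contradicting $x^2 \neq 0$; an equivariant map $X \to \mathbb{S}^0$ would pull back a class from $H^1(\mathbb{R}P^0)=0$ and force $x = 0$, again impossible. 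The $n=2$ case is precisely the ``in particular'' clause.

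The only delicate point is the bookkeeping in the convention relating the $n$-fold join, the target sphere $\mathbb{S}^{n-1}$, and the shift by one in the inequality $\textrm{g}_\textrm{free}(X,\mathbb{Z}_2)\geq \textrm{co-ind}_{\mathbb{Z}_2}(X)+1$; once this is fixed, the conclusion is immediate. I do not expect any serious obstacle here, because the substantive input, namely the nonvanishing $x^2 \neq 0$, has already been established in the proof of Theorem \ref{cohomo-complex-z2}.
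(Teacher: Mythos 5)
Your proposal is correct and follows essentially the same route as the paper: both arguments rest on the co-index computation ($x^2 \neq 0$, $x^3 = 0$ for the Stiefel--Whitney class, extracted from the proof of Theorem \ref{cohomo-complex-z2}) combined with the inequality $\textrm{g}_\textrm{free}(X,\mathbb{Z}_2) \geq \textrm{co-ind}_{\mathbb{Z}_2}(X)+1$. The only difference is that the paper simply cites this inequality from Conner and Floyd \cite{conner}, whereas you re-derive it by descending an equivariant map $X \to \mathbb{S}^{n-1}$ to $X/G \to \mathbb{R}P^{n-1}$ and using naturality of the Stiefel--Whitney class; this makes the argument self-contained but is otherwise the same proof.
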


\begin{proof}
It follows from \cite{conner} that
 $$\textrm{g}_\textrm{free}(X,\mathbb{Z}_2) \geq \textrm{{co-ind}}_{ \mathbb{Z}_2}(X)+1,$$ and hence $\textrm{g}_\textrm{free}(X,\mathbb{Z}_2)\geq3$. 
\end{proof}
\bigskip

Let $G$ be a finite group and $X$ a $G$-space. Given a continuous map $f:X \to Y$, the coincidence set $A(f,\,k)$ is defined as
$$A(f,k)= \{x\in X~|~\exists\; \textnormal{distinct}\; g_1, \dots ,g_k \in G\; \textnormal{such that}\; f(g_1x)=\cdots =f(g_kx)\}.$$ The following result of Schwarz \cite{schwarz} relates the free genus and the coincidence set.

\begin{theorem}
Let X be a paracompact connected space with a free $\mathbb{Z}_p$-action. Suppose that $\textnormal{g}_\textnormal{free}(X, \mathbb{Z}_p)>m(p-1)$. Then for any continuous map $f:X \to \mathbb{R}^m$
$$\textnormal{g}_\textnormal{free}\big(A(f,p),\mathbb{Z}_p\big) \geq  \textnormal{g}_\textnormal{free}(X,\mathbb{Z}_p)-m(p-1).$$
In particular, the set $A(f,p)$ is non-empty. 
\end{theorem}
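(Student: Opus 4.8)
The plan is to realise the coincidence set $A(f,p)$ as the zero set of a single $\mathbb{Z}_p$-equivariant map and then to play the Schwarz genus of its complement off against the genus of a representation sphere. Writing $G=\mathbb{Z}_p=\{g_1,\dots,g_p\}$, first I would assemble the $G$-map $F\colon X\to (\mathbb{R}^m)^p$ given by $F(x)=\big(f(g_1x),\dots,f(g_px)\big)$, where $G$ permutes the $p$ coordinate blocks cyclically (the regular representation tensored with $\mathbb{R}^m$). Identifying $(\mathbb{R}^m)^p\cong \mathbb{R}^m\otimes\mathbb{R}[G]$ and splitting $\mathbb{R}[G]$ into the trivial summand and the reduced regular representation, I obtain a $G$-equivariant decomposition $(\mathbb{R}^m)^p=\Delta\oplus W$, where $\Delta$ is the diagonal (the fixed subspace) and $W=\mathbb{R}^m\otimes\widetilde{\mathbb{R}[G]}$ with $\dim_{\mathbb{R}}W=m(p-1)$. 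Composing $F$ with the projection onto $W$ produces a $G$-map $\bar F\colon X\to W$ whose zero set is exactly $A(f,p)$, since $\bar F(x)=0$ precisely when $f$ is constant on the orbit of $x$. Because $p$ is prime, every nontrivial element of $G$ acts on $\widetilde{\mathbb{R}[G]}$ with eigenvalues nontrivial $p$-th roots of unity, so $G$ acts freely on $W\setminus\{0\}$.

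Second, I would set $U=X\setminus A(f,p)$, an open $G$-invariant subset on which $\bar F$ takes values in $W\setminus\{0\}$. The radial $G$-retraction $W\setminus\{0\}\to S(W)\cong S^{m(p-1)-1}$ then gives a $G$-map from $U$ onto a \emph{free} $G$-sphere, so by monotonicity of the genus under equivariant maps $\textnormal{g}_\textnormal{free}(U,G)\le \textnormal{g}_\textnormal{free}\big(S^{m(p-1)-1},G\big)$. The estimate $\textnormal{g}_\textnormal{free}\big(S^{m(p-1)-1},G\big)\le m(p-1)$ I would obtain from equivariant obstruction theory: the $m(p-1)$-fold join $G\ast\cdots\ast G$ is $\big(m(p-1)-2\big)$-connected and carries a free $G$-CW structure of dimension $m(p-1)-1$, so a $G$-map from the equally highly connected sphere $S^{m(p-1)-1}$ exists and all successive obstructions vanish. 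Hence $\textnormal{g}_\textnormal{free}(U,G)\le m(p-1)$.

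Third, I would invoke the subadditivity of the Schwarz genus over open invariant covers: if $X=U\cup N$ with $U,N$ open and $G$-invariant, then $\textnormal{g}_\textnormal{free}(X,G)\le \textnormal{g}_\textnormal{free}(U,G)+\textnormal{g}_\textnormal{free}(N,G)$. Taking $N$ to be a $G$-invariant open neighbourhood of the closed set $A(f,p)$ that admits a $G$-retraction onto it, monotonicity gives $\textnormal{g}_\textnormal{free}(N,G)\le \textnormal{g}_\textnormal{free}(A(f,p),G)$, while $U\cup N=X$. Combining the three bounds yields
$$\textnormal{g}_\textnormal{free}(X,G)\le m(p-1)+\textnormal{g}_\textnormal{free}(A(f,p),G),$$
which rearranges to the asserted inequality. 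Finally, the hypothesis $\textnormal{g}_\textnormal{free}(X,G)>m(p-1)$ forces $\textnormal{g}_\textnormal{free}(A(f,p),G)\ge 1$, and since the genus vanishes only on the empty set, this shows $A(f,p)\neq\varnothing$.

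I expect the main obstacle to be the two genus-theoretic inputs that drive the estimate, rather than the construction of $\bar F$: namely, verifying subadditivity of $\textnormal{g}_\textnormal{free}$ over open invariant covers in the paracompact category (a partition-of-unity argument on the defining join maps) and producing the $G$-retraction of a neighbourhood $N$ onto $A(f,p)$ that secures $\textnormal{g}_\textnormal{free}(N,G)\le \textnormal{g}_\textnormal{free}(A(f,p),G)$. Both steps rest on paracompactness and on the freeness of the $G$-action, the latter depending crucially on $p$ being prime so that $G$ acts freely off the diagonal $\Delta$.
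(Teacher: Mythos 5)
The paper itself gives no proof of this statement---it is quoted directly from Schwarz \cite{schwarz}---so your attempt can only be measured against the standard argument, and in outline you follow exactly that argument: the equivariant test map $\bar F\colon X\to W=\mathbb{R}^m\otimes\widetilde{\mathbb{R}[G]}$ whose zero set is precisely $A(f,p)$ (freeness of the $G$-action on $W\setminus\{0\}$ indeed uses that $p$ is prime), the bound $\textnormal{g}_\textnormal{free}\bigl(X\setminus A(f,p),G\bigr)\le \textnormal{g}_\textnormal{free}\bigl(S(W),G\bigr)\le m(p-1)$ (correct, e.g.\ by the dimension bound for the genus of a free $G$-space), and subadditivity of the genus over invariant open covers, which does hold for paracompact spaces by feeding an invariant partition of unity into join coordinates. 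All of these steps are sound.

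The genuine gap is the step you yourself flag as the main obstacle and then leave unresolved: the existence of an invariant open neighbourhood $N$ of $A(f,p)$ admitting a $G$-retraction onto $A(f,p)$. No such $N$ need exist. The hypotheses give no regularity whatsoever for the closed invariant set $A(f,p)$: it need not be an ANR, and, for instance, a closed non-locally-connected subset of a manifold is a retract of no open neighbourhood (retracts of locally connected spaces are locally connected); paracompactness and freeness of the action cannot repair this, so this step would fail as stated. What replaces the retraction in Schwarz's proof is the \emph{continuity property of the genus}, which extends the map instead of retracting the space: if $\textnormal{g}_\textnormal{free}\bigl(A(f,p),G\bigr)=k$, there is a $G$-map from $A(f,p)$ to the $k$-fold join $G\ast\cdots\ast G$; since this join is a compact free $G$-complex, hence an equivariant absolute neighbourhood extensor for normal $G$-spaces (equivariant Tietze--Gleason extension), that $G$-map extends over some invariant open neighbourhood $N$ of $A(f,p)$, giving $\textnormal{g}_\textnormal{free}(N,G)\le k$ directly from the definition of the genus rather than from monotonicity along a nonexistent retraction. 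With this substitution your chain $\textnormal{g}_\textnormal{free}(X,G)\le \textnormal{g}_\textnormal{free}(N,G)+\textnormal{g}_\textnormal{free}\bigl(X\setminus A(f,p),G\bigr)\le \textnormal{g}_\textnormal{free}\bigl(A(f,p),G\bigr)+m(p-1)$ closes the proof, and the non-emptiness of $A(f,p)$ follows as you say, since the empty set has genus $0$.
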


As a consequence of the preceding theorem and Proposition \ref{genus-bound}, we obtain the following

\begin{proposition}
Let $X \simeq_2 \mathbb{C}H_{r,s} $ be a compact Hausdorff space with a free $\mathbb{Z}_2$-action. Then any continuous map $X \to \mathbb{R}^2$ has a non-empty coincidence set.
\end{proposition}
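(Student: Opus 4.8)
The plan is to apply the preceding theorem of Schwarz directly, with the parameters $p=2$ and $m=2$, using Proposition \ref{genus-bound} to verify its genus hypothesis. First I would record that $X$ satisfies the standing assumptions of that theorem: being compact Hausdorff, $X$ is paracompact, and since $X \simeq_2 \mathbb{C}H_{r,s}$ forces $H^0(X;\mathbb{Z}_2)\cong\mathbb{Z}_2$, the space $X$ is connected. The $\mathbb{Z}_2$-action is free by hypothesis, so all the structural requirements are in place.

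Next I would check the numerical inequality that triggers the theorem. With $p=2$ and $m=2$ we have $m(p-1)=2$, while Proposition \ref{genus-bound} gives $\textnormal{g}_\textnormal{free}(X,\mathbb{Z}_2)\ge 3$. Hence $\textnormal{g}_\textnormal{free}(X,\mathbb{Z}_2) > m(p-1)$, so the hypothesis of the theorem is met for every continuous map $f\colon X\to\mathbb{R}^2$.

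Applying the theorem then yields, for any such $f$, the bound
$$\textnormal{g}_\textnormal{free}\big(A(f,2),\mathbb{Z}_2\big) \ge \textnormal{g}_\textnormal{free}(X,\mathbb{Z}_2)-2 \ge 1 > 0,$$
and in particular the coincidence set $A(f,2)$ is non-empty. Unwinding the definition for $\mathbb{Z}_2=\{e,g\}$, the set $A(f,2)$ consists of those $x\in X$ with $f(x)=f(gx)$, so the conclusion is that every continuous map $X\to\mathbb{R}^2$ identifies some point with its image under the free involution.

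There is no substantive obstacle here; the argument is a bookkeeping application of Schwarz's theorem. The only point requiring a moment's care is confirming that the genus lower bound $3$ strictly exceeds $m(p-1)=2$, since the theorem needs a strict inequality — and this is exactly what Proposition \ref{genus-bound} supplies. I would therefore present the proof as a short deduction, citing Proposition \ref{genus-bound} for the genus bound and the preceding theorem for the non-emptiness of $A(f,2)$.
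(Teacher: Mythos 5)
Your proposal is correct and matches the paper's own reasoning exactly: the paper states this proposition as an immediate consequence of Schwarz's theorem (with $p=2$, $m=2$) combined with the bound $\textnormal{g}_\textnormal{free}(X,\mathbb{Z}_2)\geq 3$ from Proposition \ref{genus-bound}. Your additional verification of paracompactness and connectedness is a sound bit of bookkeeping that the paper leaves implicit.
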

\bigskip

\ack{Dey thanks UGC-CSIR for the Senior Research Fellowship towards this work and Singh acknowledges support from SERB MATRICS Grant MTR/2017/000018.}
\bigskip

\bibliographystyle{plain}

\end{document}